\numberwithin{equation}{section}
\def\e{\epsilon}
\def\R{\mathbb{R}}
\def\cA{\mathcal{A}}
\def\pc{\bar{*}}
\newtheorem{theorem}{Theorem}[section]
\newtheorem{lemma}{Lemma}[section]
\newtheorem{proposition}{Proposition}[section]
\newtheorem{remark}{Remark}[section]
\newtheorem{corollary}{Corollary}[section]
\newtheorem{definition}{Definition}[section]
\title{A class of monotonicity-preserving variable-step discretizations for Volterra integral equations and time fractional ordinary differential equations}
\author[a]{Yuanyuan Feng\thanks{
E-mail: yyfeng@math.ecnu.edu.cn}}
\author[b]{Lei Li\thanks{E-mail: leili2010@sjtu.edu.cn}}
\affil[a]{School of  Mathematical Sciences, Shanghai Key Laboratory of PMMP, East China Normal University, Shanghai, 200241, P.R. China. }
\affil[b]{School of Mathematical Sciences, Institute of Natural Sciences, MOE-LSC, Shanghai Jiao Tong University, Shanghai, 200240, P.R.China.}
\date{}
\begin{document}

\maketitle

\begin{abstract}
We study in this paper the monotonicity properties of the numerical solutions to Volterra integral equations with nonincreasing completely positive kernels on nonuniform meshes.  There is a duality between the complete positivity and the properties of the complementary kernel being nonnegative and nonincreasing. Based on this, we propose the ``complementary monotonicity''  to describe the nonincreasing completely positive kernels, and the ``right complementary monotone'' (R-CMM) kernels as the analogue for nonuniform meshes. We then establish the monotonicity properties of the numerical solutions inherited from the continuous equation if the discretization has the R-CMM property.  Such a property seems weaker than being log-convex and there is no resctriction on the step size ratio of the discretization for the R-CMM property to hold.
\end{abstract}

{\it Keywords:  Resolvent, convolution, complete positivity, nonuniform mesh,  fractional differential equations}

\section{Introduction}

The time-delay memory is ubiquitous in physical models, which may be resulted from dimension reduction as in the generalized Langevin model for particles in heat bath (\cite{zwanzig73,zwanzig01,kouxie04,li2017fractional}) or may be resulted from viscoelasticity in soft matter (\cite{colemannoll1961,pd97}), or dielectric susceptibility for polarization \cite{stenzel05,cai13}, to name a few examples. Due to the causality and time translational invariance \cite[Chap. 1]{nussenzveig72}, the memory terms are often modeled by a one-sided convolution $\int_0^t a(t-s)f(s)\,ds$ where $a$ is the memory kernel.  Causality refers to the fact that the output cannot precede the input, which implies, by  Tichmarsh's theorem, that the Fourier transform of $a$ is analytic in the upper half plane so that the real and imaginary parts satisfy the Kramers-Kronig relation (see \cite{nussenzveig72}). Besides the causality, the memory kernel $a$ should also reflect the fading memory principle \cite{wang1965principle,pd97}. A popular model to build in these physical principles would be the nonincreasing, completely positive kernels, which are a class of kernels with nonnegative resolvent kernels (see \cite{clement1981asymptotic,miller1968volterra} and see also section \ref{subsec:timecontinuous} for the definitions). These kernels have been proved to reflect many important asymptotic properties that the system is expected to have \cite{clement1981asymptotic}. 
A special but important class of the nonincreasing completely positive kernels is the completely monotone (CM) functions, which have been well studied in literature  \cite{widder41,ssv12}.  The CM functions have been widely used in physical modeling. For example, the interconversion relationship in the linear viscoelasticity is modeled by a convolution quadrature with completely monotone kernels \cite{loy2014interconversion}.  There are many interesting models with memory in literature for various applications \cite{cuesta2006convolution,tang2019energy,quan2020define,zhan2019complete,bonaccorsi2012optimal}.

A basic model for the memory is the Volterra integral equations (see \cite{gripenberg1990volterra,miller1971smoothness,weis1975asymptotic,loy2014interconversion}).  In this work, we focus on the Volterra integral equations taking values in $\R$. Let $f: [0,\infty)\times \R\to \R$ be a given smooth function. The integral equation we consider in this work is
\begin{gather}\label{eq:vol}
u(t)=h(t)+\int_0^t a(t-s) f(s, u(s))\,ds,
\end{gather}
where $u: [0, T)\to \R$ is the solution curve. The function $h(t)$ is a given signal function.  The function $a(\cdot)$ is the convolution kernel and is assumed to be nonzero. We will allow $a$ to be weakly singular in the sense that $a(0+)$ could be $\infty$ but it is integrable on $(0, 1)$:
\begin{gather}
0<\int_0^1 a(t)dt<\infty.
\end{gather}
A special example of the integral equation \eqref{eq:vol} is the time fractional ordinary differential equations (FODEs) with Caputo derivative  \cite{diethelm10} of order $\alpha\in (0, 1)$
\begin{gather}\label{eq:fracode}
D_c^{\alpha}u=f(t, u), \quad u(0)=u_0.
\end{gather}
Here, the Caputo derivative is defined by
\begin{gather}\label{eq:captra}
D_c^{\alpha}u=\frac{1}{\Gamma(1-\alpha)}\int_0^t\frac{u'(s)}{(t-s)^{\alpha}}ds.
\end{gather}
See also \cite{liliu18frac,liliu2018compact} for some generalized definitions. The time fractional ODE \eqref{eq:fracode} is equivalent to the integral equation (see \cite{diethelm10,liliu2018compact} etc)
\begin{gather}\label{eq:fracint}
u(t)=u_0+\frac{1}{\Gamma(\alpha)}\int_0^{t}(t-s)^{\alpha-1}f(s,u(s))ds.
\end{gather}
Hence, the FODEs are Volterra equations with kernel
\begin{gather}\label{eq:kernelfode}
a(t)=\frac{1}{\Gamma(\alpha)}t^{\alpha-1}, \quad t>0.
\end{gather}
If $a\equiv 1$, it reduces to the usual ODE, which is in fact the $\alpha\to 1$ limit of the above time fractional equation.

We will focus on kernels $a$ that are nonincreasing and  completely positive \cite{clement1981asymptotic}.  The Volterra equation \eqref{eq:vol} with complete positive kernels (not necessarily nonincreasing) have two important monotonicity properties. The first is that the order of two solution curves will stay the same (i.e., two solution curves will not cross) for reasonable given input signals $h$. A second monotonicity property is that the solution to the autonomous equations is monotone for reasonable given signals (see section \ref{subsec:contcmm} below for the concrete requirement on the input signals).  The nonincreasing property cannot be implied by the complete positivity, but is required by the fading memory principle.

Due to the memory kernels, especially some weakly singular kernels, the models often exhibit multi-scale behaviors \cite{cuesta2006convolution,tang2019energy,zhan2019complete}, which bring numerical challenges. The adaptive time-stepping is often adopted to address this issue \cite{mclean1996discretization,kopteva2019error,liao2019discrete,stynes2017error,li2019linearized}.  Suppose that the computational time interval is $[0, T]$. Let $0=t_0<t_1<t_2<\cdots<t_N=T$ be the grid points.  We define
\begin{gather}
\tau_n:=t_n-t_{n-1}, \quad n\ge 1.
\end{gather}
Let $u_n$ be the numerical solution at $t_n$.
By implicit discretization of the Volterra integral equation \eqref{eq:vol}, one may obtain
\begin{gather}\label{eq:integraldis}
u_n=h(t_n)+\sum_{j=1}^n \bar{a}_{n-j}^n f(t_j, u_j)\tau_j=h(t_n)+\sum_{j=1}^n a_{n-j}^n f(t_j, u_j).
\end{gather}
Here, $\{\bar{a}_{n-j}^n\}$ is an approximation of $a(t_n-s)$ on $[t_{j-1}, t_j]$ while
$a_{n-j}^n$ is like the inegral of $a(t_n-s)$ on this interval. 
It is clear that
\begin{lemma}
Suppose $f(t, \cdot)$ is Lipschitz with Lipschitz constant $M$ uniform in $t$. If $M\sup_{j\le N}\tau_j \bar{a}_0^j<1$, then the numerical solution to \eqref{eq:integraldis} is uniquely solvable.
\end{lemma}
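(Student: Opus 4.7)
The plan is to solve for $u_n$ inductively in $n$, treating the known quantities from previous steps as constants and invoking the Banach contraction mapping theorem at each step.

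First, I would rewrite the scheme \eqref{eq:integraldis} by isolating the implicit term (the $j = n$ summand):
\begin{equation*}
u_n = \Bigl( h(t_n) + \sum_{j=1}^{n-1} \bar{a}_{n-j}^n \tau_j f(t_j, u_j) \Bigr) + \bar{a}_0^n \tau_n f(t_n, u_n).
\end{equation*}
Assuming $u_1, \dots, u_{n-1}$ have been determined (the base case $n=0$ is trivial since $u_0 = h(0)$), the parenthesized expression is a known real number, call it $C_n$. Then $u_n$ solves the scalar fixed-point equation $u_n = T_n(u_n)$, where
\begin{equation*}
T_n(x) := C_n + \bar{a}_0^n \tau_n f(t_n, x).
\end{equation*}

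Next I would verify that $T_n : \R \to \R$ is a contraction. By the assumed Lipschitz bound on $f(t_n, \cdot)$,
\begin{equation*}
|T_n(x) - T_n(y)| = \bar{a}_0^n \tau_n |f(t_n, x) - f(t_n, y)| \leq M \bar{a}_0^n \tau_n\, |x - y|,
\end{equation*}
and the hypothesis $M \sup_{j \leq N} \tau_j \bar{a}_0^j < 1$ gives in particular $M \bar{a}_0^n \tau_n < 1$. Thus $T_n$ is a strict contraction on the complete metric space $\R$, so by the Banach fixed point theorem it has a unique fixed point, which is the desired $u_n$. Iterating this argument from $n=1$ up to $n=N$ yields unique solvability of the entire scheme.

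There is essentially no obstacle here; the statement is really just a rephrasing of the Banach contraction principle, and the only observation needed is that the Lipschitz constant for the implicit map at step $n$ is exactly $M \bar{a}_0^n \tau_n$, which is bounded by the hypothesis. The $\sup$ over $j \leq N$ in the hypothesis is stronger than necessary for any single step, but it guarantees the contraction property uniformly, so that the induction goes through for every $n$.
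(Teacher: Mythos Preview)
Your proposal is correct and is precisely the standard argument one expects here. The paper itself does not supply a proof of this lemma (it is introduced with ``It is clear that''), so your Banach fixed-point argument via induction on $n$ is the intended justification; there is nothing to compare against.
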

From the viewpoint of structure-preserving methods, it is desired that the discrete numerical methods preserve the monotonicity properties of the solutions.
Our main goal is to investigate this for the scheme above.

In \cite{li2021complete},  the so-called CM-preserving schemes have been proposed for equations with CM kernels so that these two monotonicity properties can be preserved. These schemes have been shown to enjoy good stability properties. Recently, Chen and Stynes used the CM-preserving property to obtain sharp error estimate for the multi-term time-fractional diffusion equations \cite{chen2022using}.  However, the CM-preserving schemes are based on the convolution and thus they are restricted on uniform meshes. Moreover, if the time continuous kernel itself is not CM, there is no reason to consider such a class of discretizations.

The goal of this paper is to identify a suitable class of variable discretizations that are discrete analogue  for the nonincreasing completely positive kernels, so that the monotonicity properties for numerical solutions will be preserved. Motivated by a characterization of completely positive kernels due to Clement and Nohel  \cite{clement1981asymptotic}, we propose ``right complementary monotone'' (R-CMM) kernels for nonuniform meshes inherited from the continuous kernel. This turns out to be a suitable requirement for our purpose. In particular, we prove that if the discrete kernel satisfies this property, one solution is always bigger if the initial value is bigger, and the numerical solutions to the autonomous equations are monotone. It is found that there is no restriction on the ratios of the stepsizes for the R-CMM property to hold and it is weaker than the usual condition that $a$ is log-convex. 

The rest of the paper is organized as follows. In section \ref{sec:continuous}, we first present some concepts and results for the time continuous kernels and equations. Then, motivated by the duality between the complete positivity and the properties of the complementary kernel, we propose the complementary monotoncity (CMM).  In section \ref{sec:cmmunifom}, we consider the CMM property on uniform meshes. In section \ref{sec:Rcmmnonuniform}, we introduce the fundamental tools we use in this work, namely the R-CMM property for kernels on nonuniform meshes. In section \ref{sec:monpre}, we prove the main results in this paper, namely the schemes whose kernels are R-CMM will preserve the two monotonicity properties. A simple but important example for the FODEs is provided for illustration in section \ref{sec:ex}.

\section{Time continuous kernels and equations}\label{sec:continuous}

In this section, we consider the time continuous kernels and the time continuous integral equations. In section \ref{subsec:timecontinuous}, we review the preliminary concepts and show the monotonicity results of the Volterra integral equations with completely positive kernels. In section \ref{subsec:contcmm}, we discuss the duality between complete positivity and the nonnegativity and nonincreasing property of the complementary kernels, and then propose the concept of complementary monotonicity.

\subsection{The resolvent kernels and complete positivity}\label{subsec:timecontinuous}

We first recall the standard one-sided convolution for two functions $u$ and $v$ defined on $[0,\infty)$
\begin{gather}
u*v(t)=\int_{[0, t]}u(s)v(t-s)\,ds,
\end{gather}
which can be generalized to distributions whose supports are on $[0,\infty)$ (see \cite[sections 2.1,2.2]{liliu18frac}). The convolution is commutative, associative. The identity is the Dirac delta $\delta$.
With this convolution, the Volterra integral equation \eqref{eq:vol} can be written as
\[
u=h+a*f(\cdot, u(\cdot)).
\]
The resolvent kernels often play important roles (see \cite{clement1981asymptotic,miller1968volterra} for examples).
\begin{definition}\label{def:resol}
Let $\lambda>0$. The resolvent kernels $r_{\lambda}$ and $s_{\lambda}$ for $a$ are defined respectively by
\begin{gather}
r_{\lambda}+\lambda r_{\lambda}*a=\lambda a, \quad 
 s_{\lambda}+\lambda s_{\lambda}* a=1.
\end{gather}
\end{definition}
Clearly, the resolvent kernel $r_{\lambda}$ satisfies
\begin{gather}\label{eq:interres}
(\delta +\lambda a)*(\delta-r_{\lambda})=\delta.
\end{gather}
It is clearly that (see \cite{clement1981asymptotic})
\begin{gather}
s_{\lambda}=1*(\delta -r_{\lambda})=1-\int_0^t r_{\lambda}(\tau)\,d\tau.
\end{gather}
Intuitively,  $\delta-r_{\lambda}=\lambda^{-1}r_{\lambda}*a^{(-1)}$. Note that the convolutional inverse $a^{(-1)}$ is not clear at this point, but this gives $s_{\lambda}=\lambda^{-1}r_{\lambda}*a^c$,
where $a^c$ is the complementary kernel appeared below in Lemma \ref{lmm:contcp} and Definition \ref{def:contcmm} later.

In \cite{clement1981asymptotic},  the so-called ``completely positive'' kernels were considered by Clement and Nohel.
\begin{definition}\label{def:cpcont}
Let $T>0$. A kernel $a\in L^1(0, T)$ is said to be completely positive if both the resolvent kernels $r_{\lambda}$ and $s_{\lambda}$ defined in Definition \ref{def:resol} are nonnegative for every $\lambda>0$.
\end{definition}

A sufficient condition is the following (see \cite{miller1968volterra}).
\begin{lemma}\label{lmm:continouslogconv}
If the kernel $a\in L^1(0, T)$ is nonnegative, nonincreasing and $t\mapsto \log a(t)$ is convex, then $a$ is completely positive.
\end{lemma}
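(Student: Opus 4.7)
My plan is to reduce the two claims $r_\lambda\geq 0$ and $s_\lambda\geq 0$ to a single one by exploiting the identity $s_\lambda=1-1*r_\lambda$ together with the fact that $r_\lambda=-s_\lambda'$ (obtained by differentiating the defining equation for $s_\lambda$), and then to prove $r_\lambda\geq 0$ using the log-convexity hypothesis. The result is classical; a reference is \cite{miller1968volterra}.

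As a first step I would regularize by setting $a_\epsilon(t):=a(t+\epsilon)$ for small $\epsilon>0$. This preserves all three hypotheses, since translation preserves convexity of $\log a$, and $a_\epsilon$ is now bounded (and smooth away from $0$) so that $r_\lambda^\epsilon, s_\lambda^\epsilon$ are classical solutions. Standard $L^1$-stability of Volterra resolvents ensures $r_\lambda^\epsilon\to r_\lambda$ and $s_\lambda^\epsilon\to s_\lambda$ in $L^1_{\loc}$ as $\epsilon\to 0^+$, so positivity transfers to the limit. With regularity in hand, differentiating $s_\lambda+\lambda a*s_\lambda=1$ in $t$ (using $s_\lambda(0)=1$, which follows from the equation and boundedness) yields
\[
(-s_\lambda')+\lambda a*(-s_\lambda')=\lambda a,
\]
so by uniqueness of Volterra resolvents $r_\lambda=-s_\lambda'$. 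Consequently $r_\lambda\geq 0$ is equivalent to $s_\lambda$ being nonincreasing, and combined with $s_\lambda(0)=1$ and a first-touching-zero argument using $a\geq 0$, it also forces $0\leq s_\lambda\leq 1$. The two positivity claims thus collapse into the single goal $r_\lambda\geq 0$ on $[0,T]$.

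The crux, and main obstacle, is to establish $r_\lambda\geq 0$, since the other two hypotheses (nonnegativity and nonincreasing-ness) alone do not suffice. The useful reformulation of log-convexity here is that $t\mapsto a(t+h)/a(t)$ is nondecreasing in $t$ for each $h>0$, equivalently $-(\log a)'$ is nonincreasing. One approach is a Picard iteration $r_\lambda^{(0)}=0$, $r_\lambda^{(n+1)}=\lambda a-\lambda a*r_\lambda^{(n)}$, and an induction showing $0\leq r_\lambda^{(n)}\leq\lambda a$, where the ratio monotonicity is exactly what keeps the alternating signs of the Neumann series under control. A cleaner alternative, when it applies, is to approximate $a$ in $L^1_{\loc}$ by nonnegative combinations of decaying exponentials $\sum_i c_i e^{-\mu_i t}$ (each of which is completely monotone and has $r_\lambda,s_\lambda\geq 0$ by explicit Laplace inversion) and pass to the limit. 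The technical heart of the proof, in either approach, is controlling the approximation uniformly on $[0,T]$ despite the possible weak singularity of $a$ at $0^+$, which is precisely what the translation step accomplishes.
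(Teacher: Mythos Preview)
The paper does not give its own proof of this lemma: it simply cites \cite{miller1968volterra} and moves on. So there is nothing in the paper to compare your argument against beyond that reference, which you already cite as well.

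Your reduction is sound: the regularization $a_\epsilon(t)=a(t+\epsilon)$ preserves log-convexity and monotonicity, the identity $r_\lambda=-s_\lambda'$ holds for the regularized problem, and passing to the limit is routine. So the problem correctly collapses to $r_\lambda\ge 0$.

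The gap is in the ``crux'' paragraph. Your Picard iteration claim $0\le r_\lambda^{(n)}\le\lambda a$ does not hold as stated: already at $n=2$ one needs $\lambda\,(a*a)(t)\le a(t)$, which fails for large $\lambda$ or large $t$, and no amount of ratio monotonicity rescues that particular bound. The alternative via exponential sums is worse: nonnegative combinations of $e^{-\mu t}$ approximate only completely monotone kernels, a strict subclass of the log-convex ones (e.g.\ $a(t)=e^{-\sqrt{t}}$ is log-convex, nonincreasing, but not CM), so this route cannot cover the full statement. Miller's actual argument is a first-zero contradiction: assuming $r_\lambda>0$ on $[0,t_0)$ with $r_\lambda(t_0)=0$, the resolvent equation at $t_0$ gives $a(t_0)=\int_0^{t_0}a(t_0-s)r_\lambda(s)\,ds$, and differentiating and using that $(\log a)'$ is nondecreasing (so $a'(t_0-s)/a(t_0-s)\le a'(t_0)/a(t_0)$) forces $r_\lambda'(t_0)\ge 0$, from which one extracts the contradiction. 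If you want a self-contained proof rather than a citation, that is the line to develop; your current sketch does not yet contain a working mechanism for the key step.
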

In fact, the statement for the log-convexity of $a$ in  \cite{miller1968volterra} is that $t\mapsto a(t)/a(t+T)$ is nonincreasing for all $T>0$. If $a$ is CM, $\log a$ is convex (see \cite[Lemma 2]{miller1968volterra}). 

The following description of the complete positivity has been proved in \cite[Theorem 2.2]{clement1981asymptotic}. (The second claim has been mentioned in Remark (i) below the main result there.)  
\begin{lemma}\label{lmm:contcp}
Let $T>0$. A kernel $a\in L^1(0, T)$ with $a\not\equiv 0$ is completely positive on $[0, T]$ if and only if there exists $\alpha\ge 0$ and $c \in L^1(0, T)$ nonnegative and nonincreasing  satisfying
\begin{gather}\label{eq:cpchar}
\alpha a+c*a=a*(\alpha \delta+c)=1_{t\ge 0}.
\end{gather}
Moreover, provided that $a$ is completely positive, $\alpha>0$ if and only if $a\in L^{\infty}(0, T)$ and in this case $a$ is in fact absolutely continuous on $[0, T]$.
\end{lemma}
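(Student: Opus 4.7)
The plan is to work at the level of Laplace transforms and use the duality between Bernstein functions and completely monotone (CM) functions. Writing $b := \alpha\delta + c$, the claimed identity $\alpha a + c*a = a*b = 1_{t\geq 0}$ becomes $\hat a(s)\hat b(s) = 1/s$, so with $\psi(s) := s\hat b(s) = 1/\hat a(s)$ the existence of the decomposition (with $\alpha\geq 0$ and $c\geq 0$ nonincreasing) is equivalent, via the L\'evy-Khintchine representation
\[
\psi(s) = \alpha s + \int_0^\infty (1 - e^{-sx})\,d\nu(x),\qquad c(t) = \nu([t,\infty)),
\]
to $\psi$ being a Bernstein function without constant term.

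For the \emph{if} direction, convolving the resolvent equation $r_\lambda + \lambda r_\lambda * a = \lambda a$ with $b$ and using $a*b = 1_{t\geq 0}$ produces the $a$-free Volterra equation
\[
\alpha r_\lambda(t) + (c*r_\lambda)(t) + \lambda\int_0^t r_\lambda(s)\,ds = \lambda, \qquad t>0,
\]
whose Laplace transform is $\hat r_\lambda(s) = \lambda/(\psi(s)+\lambda)$. Since $c\geq 0$ nonincreasing makes $\psi$ a Bernstein function, $\lambda/(\psi+\lambda)$ is CM by a classical fact, and Bernstein's theorem then gives $r_\lambda\geq 0$. Evaluating at $s=0$ yields $\int_0^\infty r_\lambda\,dt = \hat r_\lambda(0)\leq 1$, so $s_\lambda = 1 - \int_0^t r_\lambda \geq 0$.

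For the \emph{only if} direction, Laplace-transforming the resolvent equation directly gives the same $\hat r_\lambda(s) = \lambda/(\psi(s)+\lambda)$ with $\psi = 1/\hat a$. The hypothesis $r_\lambda\geq 0$ for every $\lambda>0$ combined with Bernstein's theorem says $\lambda/(\psi+\lambda)$ is CM for every $\lambda>0$; the standard resolvent characterization of Bernstein functions then forces $\psi$ itself to be Bernstein, and the L\'evy-Khintchine decomposition yields the required $\alpha$ and $c$. For the additional claim, reading off $\alpha a\leq \alpha a + c*a = 1$ gives $\alpha>0\Rightarrow \|a\|_\infty\leq 1/\alpha$ at once. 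Conversely, $a\in L^\infty$ makes $s\hat a(s)$ uniformly bounded, so $\psi(s)=1/\hat a(s)$ grows at most linearly, forcing the linear-drift coefficient $\alpha>0$. When $\alpha>0$, absolute continuity of $a$ follows from $a = (1-c*a)/\alpha$ by splitting $(c*a)(t+h)-(c*a)(t)$ into $\int_0^t[c(t+h-s)-c(t-s)]a(s)\,ds + \int_t^{t+h} c(t+h-s)a(s)\,ds$ and controlling both pieces via $\|a\|_\infty$ together with integrals of $c$.

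The main obstacle is the Bernstein-function machinery, specifically the resolvent characterization ``$\psi$ is Bernstein iff $\lambda/(\psi+\lambda)$ is CM for every $\lambda>0$''; a self-contained proof must either import this fact or develop it via an approximation argument (for instance, approximating $c$ by nonincreasing step functions and verifying nonnegativity of the resolvents at each step). An alternative, more hands-on route is to work with $\rho_\lambda := \lambda s_\lambda$, which is nonnegative and nonincreasing since $s_\lambda' = -r_\lambda\leq 0$ and $s_\lambda(0)=1$; passing to a vague limit as $\lambda\to\infty$ in $a*\rho_\lambda = 1-s_\lambda$ recovers $b$, but identifying the atomic part $\alpha\delta_0$ and justifying the limit of the convolution require additional care.
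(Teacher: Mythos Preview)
The paper does not prove this lemma itself; it is cited as \cite[Theorem~2.2]{clement1981asymptotic}. However, Appendix~\ref{app:contcmm} sketches the Clement--Nohel argument for the closely related Proposition~\ref{pro:cmmcontdes}, and that sketch is the natural point of comparison.

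Your primary route via Laplace transforms and the Bernstein characterization is genuinely different from the Clement--Nohel/paper approach, which never leaves the time domain. For the ``if'' direction they regularize $\alpha$ to $\alpha+\epsilon$, derive a Volterra equation for $r_{\epsilon,\lambda}$ with a positive nonincreasing kernel, read off $r_{\epsilon,\lambda}>0$ directly, and pass $\epsilon\to 0$. For the ``only if'' direction they use exactly the hands-on route you flag at the end: $\lambda s_\lambda$ is nonnegative, nonincreasing, and bounded on compacts away from $0$, so a vague limit as $\lambda\to\infty$ produces $\alpha\delta+c$. Your transform argument is slicker when it applies, since the resolvent characterization of Bernstein functions packages both directions at once; their argument is more elementary and, crucially, local.

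That locality is where your primary argument has a real gap. The hypothesis is complete positivity on $[0,T]$ for finite $T$, and the equations $a*b=1_{t\ge 0}$ and $r_\lambda+\lambda r_\lambda*a=\lambda a$ are only asserted there. Extending $a$ by zero to $[0,\infty)$ does not yield $\hat a\hat b=1/s$ or $\hat r_\lambda=\lambda/(\psi+\lambda)$, because the convolution identities fail past $T$; and while the extended resolvent agrees with the given one on $[0,T]$, you have no information about its sign on $(T,\infty)$, so you cannot invoke Bernstein's theorem. For the ``if'' direction this is repairable: extend $c$ to a nonincreasing function on $[0,\infty)$, run the global argument, and note that the induced $a$ coincides on $[0,T]$ with the original by causality. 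For the ``only if'' direction no cheap extension is available---extending $a$ while preserving complete positivity is essentially what you are trying to prove---and this is precisely why the paper records the $\lambda s_\lambda$ compactness argument instead.
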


This result tells us that there is a complementary kernel $a^c=\alpha \delta+c$ for $a$. Cearly, $a^c$ is a  nonnegative and nonincreasing measure on $[0,T]$.  We note that a completely positive kernel $a$ is nonnegative (see \cite[Proposition 2.1]{clement1981asymptotic}). However, it has been remarked in \cite{clement1981asymptotic} that $a$ can increase at some subintervals, and also $a$ does not have to be convex. Hence, the complete positivity somehow violates the requirement of fading memory.

Now, we present some monotonicity properties of the time continuous equations with completely positive kernels. We will always assume that $h\in C([0, T], \R)\cap C^1((0, T], \R)$ where $T>0$ is the largest time considered. If $a\in L^1(0, T)$ and $f$ is smooth as assumed, then $u$ is absolutely continuous on $[0, T_b)$ where $T_b\le T$ is the largest time of existence (see \cite{gripenberg1990volterra,weis1975asymptotic}).  

The first result is about two solution curves.
\begin{theorem}\label{thm:contcomparison}
Suppose the kernel $a$ is completely positive. If the input signals $h_i\in C([0, T], \R)\cap C^1((0, T], \R)$ ($i=1,2$) and $\gamma(t):=h_1(t)-h_2(t)$ satisfies that
\[
\beta_{\lambda}(t):=(\delta-r_{\lambda})*\gamma(t)=\gamma(t)-\int_0^t r_{\lambda}(t-s)\gamma(s)\,ds \ge 0,\quad \forall \lambda>0,
\]
then the two solutions to \eqref{eq:vol} satisfy that $u_1(t)\ge u_2(t)$ for all $t$ on the common interval of existence. If moreover $\gamma(0)>0$, then $u_1(t)>u_2(t)$ for all $t$. Consequently, if $h_1$ and $h_2$ are two constants with $h_1>h_2$, then $u_1(t)>u_2(t)$.
\end{theorem}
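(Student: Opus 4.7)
The plan is to linearize the difference equation, use the resolvent $r_\lambda$ to rewrite it in a form whose kernel and source are both nonnegative, and read off $u_1\ge u_2$ from that structure.

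\textbf{Step 1 (linearization).} Put $w:=u_1-u_2$, $\gamma:=h_1-h_2$, and
$$g(s):=\int_0^1 \partial_u f\bigl(s, u_2(s)+\theta w(s)\bigr)\,d\theta,$$
so $f(s,u_1(s))-f(s,u_2(s))=g(s)w(s)$ and $|g(s)|\le M$, where $M$ is a Lipschitz constant of $f(t,\cdot)$ valid on the (compact) range of the solutions. Subtracting the two instances of \eqref{eq:vol} gives $w=\gamma+a*(gw)$.

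\textbf{Step 2 (resolvent rewrite).} Fix any $\lambda\ge M$, so that $\lambda-g(s)\ge 0$. Rewrite $w=\gamma+a*(gw)$ as
$$w+\lambda a*w=\gamma+a*\bigl((\lambda-g)w\bigr),$$
and convolve with $\delta-r_\lambda$. By \eqref{eq:interres}, $(\delta+\lambda a)*(\delta-r_\lambda)=\delta$, and from $r_\lambda+\lambda r_\lambda*a=\lambda a$ one gets $(\delta-r_\lambda)*a=\lambda^{-1}r_\lambda$. This yields
$$w(t)=\beta_\lambda(t)+\lambda^{-1}\int_0^t r_\lambda(t-s)\bigl(\lambda-g(s)\bigr)w(s)\,ds.$$

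\textbf{Step 3 (nonnegativity via Gronwall).} The hypothesis supplies $\beta_\lambda\ge 0$; complete positivity of $a$ supplies $r_\lambda\ge 0$; the choice of $\lambda$ supplies $\lambda-g\ge 0$. Letting $w^-:=\max(-w,0)$, the identity of Step~2 gives
$$w^-(t)\le \lambda^{-1}\int_0^t r_\lambda(t-s)\bigl(\lambda-g(s)\bigr)w^-(s)\,ds,$$
which is a linear Volterra inequality with integrable nonnegative kernel and zero forcing, so a standard Gronwall/Picard iteration forces $w^-\equiv 0$, i.e., $u_1\ge u_2$. The case of constant $h_1>h_2$ follows at once because then $\gamma$ is a positive constant and $\beta_\lambda=\gamma\,s_\lambda\ge 0$ by complete positivity of $a$.

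\textbf{Step 4 (strict inequality) and main obstacle.} Once $w\ge 0$ is known, Step~2 upgrades to $w\ge \beta_\lambda$, and if $\gamma(0)>0$ then $\beta_\lambda(0)=\gamma(0)>0$, whence $w>0$ on some $[0,\delta]$ by continuity. To rule out a later first zero $t_0$, I evaluate the identity of Step~2 at $t_0$: the left side vanishes while both terms on the right are nonnegative, forcing $\beta_\lambda(t_0)=0$ together with $r_\lambda(t_0-\cdot)(\lambda-g)\,w\equiv 0$ a.e.\ on $[0,t_0]$. Choosing $\lambda>M$ strictly makes $\lambda-g>0$, and the positivity of $w$ on $[0,\delta]$ then forces $r_\lambda$ to vanish a.e.\ on $[t_0-\delta,t_0]$. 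This is where the argument is most delicate, since a merely completely positive $a$ need not give a strictly positive $r_\lambda$ in general; I would close it by using that the hypothesis $\beta_\lambda\ge 0$ holds for \emph{every} $\lambda>0$ and that $a\not\equiv 0$ on any right-neighborhood of $0$ (together with the Neumann expansion $r_\lambda=\lambda a-\lambda^2 a*a+\cdots$) to preclude $r_\lambda$ from being flat near $0$, giving the desired contradiction and hence $u_1>u_2$ throughout.
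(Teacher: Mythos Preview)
Your approach is essentially the paper's: linearize, convolve with $\delta-r_\lambda$, and exploit $r_\lambda\ge 0$. A few points deserve comment.

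\textbf{Step 2 has a sign slip.} From $w=\gamma+a*(gw)$ one gets $w+\lambda a*w=\gamma+a*((\lambda+g)w)$, not $(\lambda-g)w$. Consequently the resolvent identity reads
\[
w(t)=\beta_\lambda(t)+\int_0^t r_\lambda(t-s)\bigl(1+\lambda^{-1}g(s)\bigr)w(s)\,ds,
\]
exactly as in the paper. Your choice $\lambda\ge M$ still makes the bracket nonnegative, so nothing downstream is damaged; just fix the sign.

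\textbf{Step 3 is a clean variant.} The paper argues nonnegativity by a first-zero contradiction together with a separate ``degenerate case'' analysis when $\beta_\lambda$ vanishes on an initial interval. Your $w^-$/Gronwall route is a legitimate and somewhat tidier alternative that bypasses that case split.

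\textbf{Step 4 has a genuine gap.} Your proposed closure does not work as stated: the obstruction you isolate is that $r_\lambda$ would have to vanish a.e.\ on $[t_0-\delta,t_0]$, but the Neumann expansion only tells you about $r_\lambda$ near $0$, which is the wrong interval. The paper's proof is terse here but the underlying reason is this: once $w\ge 0$ is known and $t_0$ is the \emph{first} zero, $w>0$ on the whole of $[0,t_0)$, so the vanishing of the integral forces $r_\lambda\equiv 0$ a.e.\ on $(0,t_0]$. Plugging this back into $r_\lambda+\lambda a*r_\lambda=\lambda a$ gives $a\equiv 0$ a.e.\ on $(0,t_0]$, and that is impossible for a completely positive kernel because $a*a^c=1_{t\ge 0}$ (Lemma~\ref{lmm:contcp}) would then fail on $(0,t_0]$. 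This is the missing ingredient; with it, your first-zero contradiction goes through.
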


The key in the proof is to convolving the integral equation with $\delta-r_{\lambda}$ to obtain the following relation for $v=u_1-u_2$:
\begin{gather}
v(t)=\beta_{\lambda}(t)+\int_0^t r_{\lambda}(t-s)[1+\lambda^{-1}g(s)]v(s)\,ds.
\end{gather}
This will give the nonnegativity of $v$ and see the details in Appendix \ref{app:monoproof}.  For fractional ODEs, $h_i(t)$'s are constants so $\beta_{\lambda}\ge 0$ is obvious.  Hence, the solution curves of the fractional ODEs never cross each other. This recovers the result in  \cite[Theorem 4.1]{fllx18note}.

Next, we consider another monotonicity, the monotonicity of one solution with respect to time.
\begin{theorem}\label{thm:mon4auto}
Consider the Volterra equation \eqref{eq:vol} and a solution $u$ corresponding to an input signal $h\in C([0, T], \R)\cap C^1((0, T], \R)$. Suppose that $a$ is completely positive. 
\begin{enumerate}[(i)]
\item If $(\delta-r_{\lambda})* h'\ge 0$, $f(0, h(0))\ge 0$ and $\partial_t f(t, u)|_{u=u(t)}\ge 0$ on the solution curve, then the solution is nondecreasing.
\item If $(\delta-r_{\lambda})* h'\le 0$, $f(0, h(0))\le 0$ and $\partial_t f(t, u)|_{u=u(t)}\le 0$ on the solution curve, then the solution is nonincreasing.  
\end{enumerate}
If any of the inequalities is strict, then $u$ is strictly monotone.
As a consequence, if $h$ is a constant and the equation is autonomous ($f$ only depends on $u$), then any solution curve is monotone. 
\end{theorem}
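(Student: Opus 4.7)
The plan is to derive a Volterra integral equation for $v := u'$ whose forcing term has a definite sign and whose kernel is nonnegative, so that iteration forces $v$ to inherit this sign; monotonicity of $u$ then follows immediately by integration.

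First I would differentiate $u = h + a * f(\cdot, u(\cdot))$ using the pointwise identity $(a * g)'(t) = a(t) g(0) + (a * g')(t)$ with $g(s) := f(s, u(s))$. Writing $f_s(t) := \partial_t f(t, u)|_{u=u(t)}$ and $f_u(t) := \partial_u f(t, u)|_{u=u(t)}$, this yields
\[
v = h' + a(\cdot)\, f(0, h(0)) + a * f_s + a * (f_u\, v).
\]
For any $\lambda > 0$, I add $\lambda\, a * v$ to both sides (reshaping the left-hand side as $(\delta + \lambda a) * v$) and convolve with $\delta - r_\lambda$. Using the identity $(\delta - r_\lambda) * a = \lambda^{-1} r_\lambda$, which is immediate from the defining relation $r_\lambda + \lambda\, r_\lambda * a = \lambda a$ in Definition \ref{def:resol}, one arrives at
\[
v(t) = \beta_\lambda(t) + \int_0^t r_\lambda(t - s)\bigl[1 + \lambda^{-1} f_u(s)\bigr] v(s)\, ds,
\]
with
\[
\beta_\lambda(t) := \bigl((\delta - r_\lambda) * h'\bigr)(t) + \lambda^{-1} f(0, h(0))\, r_\lambda(t) + \lambda^{-1} (r_\lambda * f_s)(t).
\]

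Under the hypotheses of (i), together with $r_\lambda \geq 0$ from complete positivity of $a$, all three summands of $\beta_\lambda$ are nonnegative, so $\beta_\lambda \geq 0$. On any compact subinterval $[0, T_1] \subset [0, T_b)$, $u$ and hence $f_u$ is bounded; choosing $\lambda > \sup_{[0, T_1]} |f_u|$ renders the kernel $r_\lambda(t - s)[1 + \lambda^{-1} f_u(s)]$ nonnegative, and a standard Volterra iteration (parallel to the argument for Theorem \ref{thm:contcomparison} sketched in Appendix \ref{app:monoproof}) yields $v \geq 0$ on $[0, T_1]$. Letting $T_1 \uparrow T_b$ completes (i), and case (ii) follows by flipping signs throughout. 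Strict monotonicity holds because strictness of any hypothesis makes $\beta_\lambda > 0$ on a set of positive measure, which the iteration propagates to $v$. For the autonomous case with constant $h$, both $h'$ and $f_s$ vanish identically, so the hypotheses reduce to the sign of the single number $f(h(0))$, and one of (i), (ii) always applies.

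The main technical obstacle is justifying the termwise differentiation when $a$ is weakly singular at $0$: the boundary term $a(t) f(0, h(0))$ may blow up as $t \downarrow 0$, and $v$ itself may inherit an integrable singularity there (e.g.\ the $t^{\alpha-1}$ singularity in the FODE case). I would handle this by first regularizing $a$ to $a_\epsilon \in L^\infty$ (for which $r_\lambda^\epsilon \to r_\lambda$ in $L^1_{\mathrm{loc}}$), deriving the identity for the regularized solution $v_\epsilon$, and passing to the limit; alternatively, one may work throughout in the convolution algebra of causal distributions on $[0, \infty)$, in which $(\delta + \lambda a)$ is invertible with inverse $\delta - r_\lambda$, so all the manipulations above become algebraically clean.
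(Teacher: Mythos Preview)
Your proposal is correct and arrives at exactly the same integral equation for $v=u'$ with the same forcing term $\beta_\lambda$ as the paper's proof; the only difference is that the paper first convolves the undifferentiated equation with $\delta-r_\lambda$ and \emph{then} differentiates, whereas you differentiate first and convolve afterward. The paper's order has the minor technical advantage that the boundary term produced by differentiation is $r_\lambda(t)(u(0)-h(0))$ (which vanishes since $u(0)=h(0)$) and $\lambda^{-1}r_\lambda(t)f(0,u(0))$, so the raw kernel $a(t)$ never appears and the singularity issue you flag is largely sidestepped without a separate regularization step.
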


For the proof, one may refer to Appendix \ref{app:monoproof}. The basic idea is again to convole the equation with $\delta-r_{\lambda}$ and take the derivative on time to obtain an equation for $u'(t)$:
\[
u'(t)=g(t)+r_{\lambda}*([1+\lambda^{-1}\partial_u f] u'),
\]
Here, $g$ is a term related to $h$ and the initial value, $\partial_t f(t, u)$. If $h$ is a constant and the equation is autonomous, $g(t)=\lambda^{-1}r_{\lambda}(t)f(h(0))$.  This clearly is a generalization of the result for autonomous time fractional ODEs in \cite[Theorem 3.3]{feng2018continuous}. For the special case $a(t)\equiv 1$, $h(t)\equiv u_0$ and $f(t, u)=f(u)$, it reduces to the autonomous ODE
\[
\dot{u}=f(u).
\]
Then, $r_{\lambda}=\lambda e^{-\lambda t}$. Equation \eqref{eq:uprime} reduces to
\[
u'(t)=e^{-\lambda t}f(u_0)+\int_0^t e^{-\lambda (t-s)}(\lambda+ f'(u(s)))u'(s)\,ds.
\]
It is clear that the right hand is in fact equal to $f(u(t))$.
Such a form is interesting as one can see easily that $u'(t)$ has the same sign as $f(u_0)$, which implies that $u$ is monotone.

\subsection{Time continuous complementary monotone kernels}\label{subsec:contcmm}

Consider the discretization \eqref{eq:integraldis}. We find that when it is uniform, i.e. $\tau_n\equiv \tau$ is a constant and  $a_{n-j}^n \equiv a_{n-j}$ depends only on the value $n-j$, the complete positivity (see \cite{fengli2023a} or Definition \ref{def:uniformcp} below), the monotonicity properties can be preserved for the numerical solutions (see Remark \ref{rmk:cmtuniform} and \cite{li2021complete} for similar proof).
However, for the nonuniform meshes, the complete positivity introduce in \cite{fengli2023a} is not enough. Fortunately, as we shall see in section \ref{subsec:monpreserving}, if we borrow a little bit more property by requiring the kernel to be nonincreasing, then the monotonicity properties can be established for the nonuniform case as well, which is natural as the kernel is often nonincreasing in physical models due to the fading memory.

Lemma \ref{lmm:contcp} gives a nice characterization of the duality between the complete positivity and the nonnegativity and nonincreasing property. If we the kernel itself is nonincreasing (and it is already known to be nonnegative), then $a^c$ is completely positive. This observation gives a nice symmetric property of the kernels, which we call ``complementary monotonicity'' in the sense that both the kernel and the complementary kernel are nonnegative and monotone, and also completely positive. 

Motivated by Lemma \ref{lmm:contcp}, we will consider kernels of the following form:
\begin{gather}
\cA:=\{a=\alpha \delta+\tilde{a}: \alpha\ge 0,\quad \tilde{a}\text{ is integrable on}~[0, T]  \}.
\end{gather}
For such kernels,  $a\in \cA$ is nonincreasing if and only if $\tilde{a}$ is nonincreasing. It is nonnegative if and only if $\tilde{a}$ is nonnegative. We then define the following.
\begin{definition}\label{def:contcmm}
A kernel $a\in \cA$ with $a\not\equiv 0$ is said to be complementary monotone (CMM) if $a$ is nonnegative and nonincreasing and there exists a kernel $a^c\in \cA$ that is nonnegative and nonincreasing such that $a*a^c=1_{t\ge 0}$.
\end{definition}

Using Lemma \ref{lmm:contcp}, one can show an analogue of Theorem \ref{thm:uniformcmm} as follows, where a completely positive kernel is allowed to be in $\cA$ (we allow an atom at $t=0$ and no longer require it to be an $L^1$ function).
\begin{proposition}\label{pro:cmmcontdes}
Fix $T>0$. The following are equivalent.
\begin{enumerate}[(a)]
\item A kernel $a\in \cA$ is CMM on $[0, T]$.
\item  The kernel $a$ is nonincreasing and is completely positive on $[0, T]$.
\item The complementary kernel $a^c\in \cA$ exists, and is nonincreasing, completely positive on $[0, T]$.
\end{enumerate}
\end{proposition}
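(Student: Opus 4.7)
The approach is to combine the Clement--Nohel characterization of complete positivity (Lemma \ref{lmm:contcp}) with the symmetry of the defining relation $a*a^c=1_{t\ge 0}$ between a kernel and its complementary kernel.

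First, I would establish (a) $\Leftrightarrow$ (b) directly from Lemma \ref{lmm:contcp}. For (b) $\Rightarrow$ (a): if $a$ is nonincreasing and completely positive, Lemma \ref{lmm:contcp} produces $\alpha \ge 0$ and $c\in L^1(0,T)$ nonnegative and nonincreasing with $\alpha a + c*a = 1_{t\ge 0}$, so $a^c:=\alpha\delta+c\in \cA$ is nonnegative and nonincreasing and witnesses $a$ being CMM. For (a) $\Rightarrow$ (b): decomposing the complementary kernel as $a^c=\alpha\delta+c$ with $\alpha\ge 0$ and $c$ nonnegative and nonincreasing, the identity $a*a^c=\alpha a+c*a=1_{t\ge 0}$ is exactly the sufficient condition in Lemma \ref{lmm:contcp}, yielding complete positivity of $a$; the nonincreasing property is part of the definition of CMM.

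Second, (a) $\Leftrightarrow$ (c) follows by the symmetry of the convolution identity. Because $a*a^c=a^c*a=1_{t\ge 0}$ treats $a$ and $a^c$ on equal footing, the kernel $a$ is CMM with complementary kernel $a^c$ precisely when $a^c$ is CMM with complementary kernel $a$. Applying the already-established equivalence (a) $\Leftrightarrow$ (b) to $a^c$ then gives that $a^c$ is nonincreasing and completely positive, which is (c); conversely, given (c), the same equivalence applied to $a^c$ yields that $a^c$ is CMM with complementary kernel $(a^c)^c$, and uniqueness of the convolutional inverse forces $(a^c)^c=a$, so $a$ is CMM.

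The only technical subtlety is that Lemma \ref{lmm:contcp} is stated for $a\in L^1(0,T)$, whereas CMM is defined for $a\in \cA$, so $a=\beta\delta+\tilde a$ with $\beta>0$ is permitted. In that case, equating atomic parts in $a*a^c=1_{t\ge 0}$ forces $\alpha\beta=0$, hence $\alpha=0$, and $c$ is determined by the ordinary Volterra equation $\beta c+\tilde a *c = 1_{t\ge 0}$, which has a nonnegative, nonincreasing solution when $\tilde a$ is nonnegative and nonincreasing. Complete positivity of $a$ in this enlarged setting can then be read off directly from the resolvent-kernel definitions by expressing $r_\lambda$ and $s_\lambda$ through $c$ and invoking the positivity of Volterra resolvents with nonnegative, nonincreasing kernels, exactly as in the proof in \cite{clement1981asymptotic}. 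I expect this $\cA$-extension of Lemma \ref{lmm:contcp} to be the only genuine obstacle; once it is in hand, the rest of the proposition is essentially a repackaging of the Clement--Nohel characterization together with the symmetry observation.
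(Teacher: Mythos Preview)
Your proposal is correct and follows essentially the same route as the paper: reduce (a)$\Leftrightarrow$(c) to (a)$\Leftrightarrow$(b) by the symmetry $a*a^c=a^c*a=1_{t\ge 0}$, and obtain (a)$\Leftrightarrow$(b) from the Clement--Nohel characterization, with the extension from $L^1(0,T)$ to $\cA$ as the only real work. The paper's appendix proof carries out precisely this extension by rerunning the resolvent arguments of \cite{clement1981asymptotic} (an $\epsilon$-regularization for (a)$\Rightarrow$(b), and the weak limit $\lambda s_\lambda\to a^c$ for (b)$\Rightarrow$(a)).

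One point to tighten: in your atom-case paragraph you assert that the solution $c$ of $\beta c+\tilde a*c=1_{t\ge 0}$ is nonnegative and nonincreasing ``when $\tilde a$ is nonnegative and nonincreasing.'' That hypothesis alone does not suffice; it is the complete positivity of $a$ (hence $s_\lambda\ge 0$, nonincreasing) together with the identification $c=\lim_{\lambda\to\infty}\lambda s_\lambda$ that delivers those properties of $c$. This is exactly the limiting argument the paper invokes, so your deferral to ``exactly as in the proof in \cite{clement1981asymptotic}'' lands in the right place---just make sure you use the complete positivity hypothesis there rather than only the monotonicity of $\tilde a$.
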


Note that in \cite{clement1981asymptotic}, the kernel does not have an atom at $t=0$. Here, we allow an atom at $t=0$. However, the proof for the properties of the complementary kernels and resolvents in \cite[Theorem 2.2]{clement1981asymptotic} is actually valid as well. We sketch the proof in Appendix \ref{app:contcmm} for the convenience of the readers. 

The result above indicates that the nonincreasing property of the kernel is kind of crucial for the complete positivity of the complementary kernel. This indicates that the complete positivity of the complementary kernel may have given a description to the fading memory principle. 

\begin{remark}
Since the complementary kernel is $a^{(-1)}*1_{t\ge 0}$. Then, the above result actually indicates that $a^{(-1)}$ exists for kernels in $\cA$, given by 
\[
a^{(-1)}=\alpha^c \delta'+(\tilde{a}^c)',
\] 
where $a^c=\alpha^c\delta+\tilde{a}^c$ and the derivative is in distributional sense. If $\tilde{a}^c(0+)$ exists and is nonzero, then there is an additional $\delta$ in the convolutional inverse.
\end{remark}

\section{Complementary monotone kernels on uniform meshes}\label{sec:cmmunifom}

In this section, we focus on the discrete analogue of complementary monotonicity for the uniform meshes, i.e., $\tau_n\equiv \tau$ and thus $t_j=j\tau$.  
This turns out to be the nonincreasing property plus the complete positivity. As remarked, though the complete positivity is already enough for the numerical solutions to preserve the desired properties, the nonincreasing property is required in applications by fading  memory principle, and moreover complementary montonicity itself is interesting enough even for uniform meshes.  Besides, the discussion here is necessary for the study later for nonuniform meshes.

 For sequences on uniform mesh, we recall the usual convolution, which is commutative,
\begin{gather}
(a*b)_n=\sum_{j=0}^n a_{n-j}b_j.
\end{gather}
It is clear that $\delta_d=(1, 0, 0, \cdots)$ is the convolution identity and the convolution inverse of $a$ exists if and only if $a_0\neq 0$.

Corresponding to Definition \ref{def:cpcont}, the complete positivity for uniform meshes was introduced in \cite{fengli2023a} as follows.
\begin{definition}\label{def:uniformcp}
A sequence $a=(a_0, a_1, \cdots)$ with $a_0\neq 0$ is said to be completely positive if the resolvent sequence given by 
\[
r_{\lambda}+\lambda r_{\lambda}*a=\lambda a
\]
is nonnegative for all $\lambda>0$ and it holds that $\sum_{i=0}^n (r_{\lambda})_i\le 1$ for all $n$.
\end{definition}

It has been proved in \cite{fengli2023a} that 
\begin{lemma}\label{lmm:discretecpuni}
The sequence $a$ with $a_0\neq 0$ is completely positive if and only if 
the convolutional inverse $b=a^{(-1)}$ satisfies
\begin{gather}\label{eq:signproperty}
b_0>0; \quad b_j\le 0,  j\ge 1;  \quad \sum_{j=0}^n b_j \ge 0, n\ge 1.
\end{gather}
\end{lemma}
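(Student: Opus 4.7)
The plan is to convert the resolvent equation into a simple pointwise recursion involving $b=a^{(-1)}$ and then read off both directions from it. Starting from $r_\lambda+\lambda r_\lambda*a=\lambda a$ and convolving with $b$ (using $a*b=\delta_d$), I would derive the key identity
\begin{equation*}
(\lambda\delta_d+b)*r_\lambda=\lambda\delta_d,
\end{equation*}
i.e.\ $(\lambda+b_0)(r_\lambda)_n+\sum_{j=0}^{n-1}b_{n-j}(r_\lambda)_j=\lambda\delta_{n,0}$ for each $n$. In particular $(r_\lambda)_0=\lambda/(\lambda+b_0)$. Summing this identity over $k=0,\dots,n$ gives a companion relation
\begin{equation*}
\lambda s_\lambda^n+\sum_{j=0}^{n}B_{n-j}(r_\lambda)_j=\lambda,\qquad s_\lambda^n:=\sum_{j=0}^n(r_\lambda)_j,\quad B_n:=\sum_{j=0}^n b_j,
\end{equation*}
which will be the workhorse for the partial-sum inequality.

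For sufficiency, assume $b_0>0$, $b_j\le 0$ for $j\ge 1$, and $B_n\ge 0$. The base case $(r_\lambda)_0=\lambda/(\lambda+b_0)\in(0,1)$ is immediate, and a straightforward induction on $n$ in the pointwise recursion gives $(r_\lambda)_n\ge 0$ for all $n$, since on the right-hand side every term $-b_{n-j}(r_\lambda)_j$ with $n-j\ge 1$ is nonnegative. Substituting these nonnegative values back into the companion partial-sum relation yields $\lambda s_\lambda^n\le\lambda$, i.e.\ $s_\lambda^n\le 1$, which matches the remaining requirement in Definition \ref{def:uniformcp}.

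For necessity, assume $a$ is completely positive. The requirement $(r_\lambda)_0=\lambda a_0/(1+\lambda a_0)\ge 0$ for every $\lambda>0$ forces $a_0>0$, hence $b_0=1/a_0>0$. For the remaining sign conditions I plan to pass to the limit $\lambda\to\infty$. An easy induction on $n$ in the recursion shows $(r_\lambda)_0\to 1$ and $(r_\lambda)_n\to 0$ for $n\ge 1$, and moreover $\lambda(r_\lambda)_n\to -b_n$ by isolating the leading term in $\lambda(r_\lambda)_n=-(b*r_\lambda)_n$. Since CP forces $(r_\lambda)_n\ge 0$, the limit yields $b_n\le 0$ for $n\ge 1$. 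The same limit applied to the companion identity, written as $\lambda(1-s_\lambda^n)=\sum_{j=0}^n B_{n-j}(r_\lambda)_j$, sends the right-hand side to $B_n$; combined with the CP requirement $s_\lambda^n\le 1$ this gives $B_n\ge 0$.

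The whole argument reduces to elementary manipulations once the key identity $(\lambda\delta_d+b)*r_\lambda=\lambda\delta_d$ is in hand. The most delicate point is the $\lambda\to\infty$ limit used in the necessity step; this is the likely ``hard part'', but because each $(r_\lambda)_n$ is an explicit rational function of $\lambda$ determined by the finite recursion, the asymptotics are transparent and no genuine obstacle arises. I would therefore expect the bulk of the write-up to consist of deriving the key identity cleanly and then unwinding the two inductions.
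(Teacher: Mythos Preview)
Your argument is correct. The paper does not actually supply a proof of this lemma; it cites the result from the companion reference \cite{fengli2023a}, so there is no in-paper proof to compare against line by line. That said, your approach is fully consistent with the machinery the paper does develop: the identity $(\lambda\delta_d+b)*r_\lambda=\lambda\delta_d$ is precisely the uniform-mesh specialization of $R_\lambda^{(-1)}=I+\lambda^{-1}A^{(-1)}$ used in the proof of Theorem~\ref{thm:Rdoublmon}, and your $\lambda\to\infty$ step is exactly the content of Lemma~\ref{lmm:resolvasym} (also quoted from \cite{fengli2023a}). The sufficiency direction via induction on the recursion, and the necessity direction via the asymptotics $r_\lambda=\delta_d-\lambda^{-1}b+O(\lambda^{-2})$ combined with the nonnegativity constraints, are the natural route and match how the paper reasons in the nonuniform setting. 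Your companion summed identity for $s_\lambda^n$ is a clean way to handle the partial-sum condition; nothing is missing.
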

This result acutally is an analogue to Lemma \ref{lmm:contcp}. This is because $a^c=b*(1,1,\cdots)$. Hence, the nonnegativity and nonincresing property of $a^c$ is reflected by \eqref{eq:signproperty}.

Similar to Definition \ref{def:contcmm}, we introduce the following.
\begin{definition}
A sequence $a=(a_0, a_1, \cdots)$ is complementary monotone (CMM) if it is nonnegative, nonincreasing, and its complementary kernel $a^c$ satisfying $a*a^c=(1,1,\cdots)$ is also nonnegative and nonincreasing.
\end{definition}

\begin{remark}\label{rmk:CMseq}
We note a characterization of the convolution inverse of a completely monotone (CM) sequence in \cite{liliu2018} (or  \cite{li2021complete} for an improved version). A sequence $v=(v_0, v_1, \ldots)$ is said to be CM if $((I-E)^j v)_k\ge 0,~\text{ for any } j\ge 0, k\ge 0$, where $(Ev)_j=v_{j+1}$.  In \cite{liliu2018},  it has been shown that the inverse of a CM sequence $a$ can be written as $a^{(-1)}=(a_0^{-1}, -c_1, -c_2, \cdots)$ with $(c_1, c_2, \cdots)$ being CM.  Consequently, consider the complementary $a^c$ in the sense that $a*a^c=(1,1,1, \cdots)$. Then, $a^c=a^{(-1)}*(1,1,\cdots)$ and $a^c$ is also CM. This is in fact our original motivation to consider using complementary kernel to inverstigate the monotonicity preserving properties, before we noticed \cite[Theorem 2.2]{clement1981asymptotic}.
\end{remark}

Below, we will show that the CMM property is simply the completely positivity plus the nonincreasing property. 
\begin{theorem}\label{thm:uniformcmm}
The following are equivalent:
\begin{enumerate}[(a)]
\item The sequence $a=(a_0, a_1,\cdots)$ is CMM.

\item $a_0\neq 0$ and $a=(a_0, a_1,\cdots)$  is nonincreasing and completely positive.

\item 
The sequence $a=(a_0, a_1, \cdots)$ is nonincreasing with $a_0\neq 0$, and its convolutional inverse $b=a^{(-1)}=(b_0, b_1, \cdots)$ satisfies
\begin{gather}\label{eq:signconditionb}
b_0>0, \quad b_j\le 0, \quad \forall j\ge 1.
\end{gather}

\item The complementary kernel of $a$ is nonincreasing and completely positive.
\end{enumerate}
\end{theorem}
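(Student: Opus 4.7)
The plan is to use Lemma \ref{lmm:discretecpuni} as the pivot, translating every property in (a)--(d) into sign conditions on the convolutional inverse $b=a^{(-1)}$. Writing $S_n:=\sum_{j=0}^n b_j$, the identity $a^c=b*(1,1,\ldots)$ gives $(a^c)_n=S_n$, so ``$a^c$ nonnegative'' means $S_n\ge 0$ for all $n$ and ``$a^c$ nonincreasing'' means $b_j=S_j-S_{j-1}\le 0$ for $j\ge 1$; also $a_0>0$ is equivalent to $b_0=1/a_0>0$. Thus (a) becomes the conjunction of: $a$ nonincreasing, $b_0>0$, $b_j\le 0$ for $j\ge 1$, and $S_n\ge 0$ for all $n$. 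In view of Lemma \ref{lmm:discretecpuni}, this is exactly ``$a$ nonincreasing and completely positive'', giving (a) $\Leftrightarrow$ (b); while (c) is the same statement stripped of the requirement $S_n\ge 0$.

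The main technical step is therefore (c) $\Rightarrow$ (a): under $a$ nonincreasing with $a_0>0$ and $b_j\le 0$ for $j\ge 1$, the partial sums $S_n$ are automatically nonnegative. The identity $a*a^c=(1,1,\ldots)$ reads $\sum_{j=0}^n a_{n-j}S_j=1$ for every $n\ge 0$; subtracting the corresponding identity for $n-1$ yields the telescoped recursion
\begin{equation*}
a_0 S_n=\sum_{j=0}^{n-1}(a_{n-1-j}-a_{n-j})\,S_j.
\end{equation*}
The nonincreasing hypothesis makes every coefficient on the right nonnegative, and $S_0=1/a_0>0$, so induction on $n$ delivers $S_n\ge 0$. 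I expect this step to be the main obstacle, since it is the unique place where the nonincreasingness of $a$ is crucially used to upgrade the sign conditions on $b$ to full CMM; a side remark is that the same recursion argument, applied to $a*b=\delta_d$, also forces $a_n\ge 0$ for all $n$, so the nonnegativity of $a$ is not an extra hypothesis in (c).

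For (a) $\Leftrightarrow$ (d) I would exploit the symmetry $(a^c)^c=a$ inherent in the relation $a*a^c=(1,1,\ldots)$. For (a) $\Rightarrow$ (d), compute $(a^c)^{(-1)}=a*(1,-1,0,\ldots)=(a_0,a_1-a_0,a_2-a_1,\ldots)$; its entries for $n\ge 1$ are $\le 0$ by nonincreasing $a$, and its partial sums telescope to $a_n\ge 0$, so Lemma \ref{lmm:discretecpuni} makes $a^c$ completely positive, while $a^c$ is already nonincreasing by (a). Conversely (d) $\Rightarrow$ (a) follows by applying the just-established (a) $\Leftrightarrow$ (b) to the sequence $a^c$: if $a^c$ is nonincreasing and completely positive then $a^c$ is CMM, so its complementary, which is $a$ itself, is nonnegative and nonincreasing, closing the cycle.
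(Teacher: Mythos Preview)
Your proof is correct and follows essentially the same route as the paper. The paper also pivots on Lemma~\ref{lmm:discretecpuni}, proves (c) $\Rightarrow$ (a) by first getting $a\ge 0$ from the sign pattern of $b$ and then getting $a^c\ge 0$ by observing that $(a^c)^{(-1)}=a*(1,-1,0,\ldots)$ has positive first entry and nonpositive tail (so the same nonnegativity-from-inverse-signs argument applies to $a^c$); your explicit telescoped recursion $a_0 S_n=\sum_{j=0}^{n-1}(a_{n-1-j}-a_{n-j})S_j$ is exactly that argument written out in coordinates. The treatment of (d) via the symmetry $(a^c)^c=a$ is likewise the paper's.
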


\begin{proof}

(c) $\Rightarrow$ (a): By the relations $a_0b_0=1$ and
\[
a_nb_0=-\sum_{j=1}^n a_{n-j}b_j, \quad n\ge 1,
\]
it is straightforward to see that $a$ must be nonnegative if $b_0>0$ and $b_j\le 0$ for $j\ge 1$ by induction.  See Lemma \ref{lmm:signofentry} below for the more general version on nonuniform meshes. By $a^c=b*(1,1,\cdots)$, $a^c$ is nonincreasing. Besides, since $a$ is nonincreasing, the first element in $(a^c)^{(-1)}=a*(1, -1, 0, \cdots)$ is positve and other elements are nonpositive. By the result just proved, $a^c$ is nonnegative, or $\sum_{j=0}^n b_j \ge 0$. 

(a) $\Rightarrow (b)$:  That $a_0>0$ and $a$ is nonincreasing are clear. Since $a^c$ is nonnegative and nonincreasing, \eqref{eq:signproperty} holds. Lemma \ref{lmm:discretecpuni} then gives the result.

(b) $\Rightarrow (c)$:  This follows directly by Lemma \ref{lmm:discretecpuni}.

The complementary kernel of $a$ exists if and only if $a_0\neq 0$. Since $a$ is CMM if and only if the complementary kernel $a^c$ is CMM, then the equivalence between $(d)$ and $(a)$ is then clear as we have established the equivalence between $(a)$ and $(c)$.
\end{proof}

Theorem \ref{thm:uniformcmm} indicates that the CMM property is just nonincreasing plus the complete positivity. As we can see from the equivalence between (a) and (c), the nonincreasing property somehow implies the nonnegativity of $a^c$ (or $\sum_{j=0}^n b_j \ge 0$).  The following tells us that the CMM property is weaker than the log-convexity.
\begin{lemma}\label{lmm:cmmuni}
 If $a=(a_0, a_1,\cdots)$ is nonnegative, nonincreasing and is log-convex in the sense $a_{j-1}a_{j+1}\ge a_j^2$, then it is CMM.
\end{lemma}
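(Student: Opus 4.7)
The plan is to apply the equivalence $(a) \Leftrightarrow (c)$ of Theorem \ref{thm:uniformcmm}: since $a$ is nonincreasing by assumption, it is enough to show that the convolutional inverse $b = a^{(-1)}$ satisfies $b_0 > 0$ and $b_j \le 0$ for every $j \ge 1$. Before attacking this I would dispose of the degenerate situation. If $a_k = 0$ for some $k \ge 1$, then nonincreasing forces $a_j = 0$ for all $j \ge k$, and log-convexity at index $k-1$ (when $k \ge 2$) gives $a_{k-2} a_k \ge a_{k-1}^2$, hence $a_{k-1} = 0$; iterating collapses $a$ to $(a_0, 0, 0, \ldots)$, whose inverse is $(1/a_0, 0, 0, \ldots)$ and trivially has the desired sign pattern. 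In the remaining case $a_j > 0$ for every $j$, and after rescaling I may take $a_0 = 1$.

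Setting $q_j := -b_j$ for $j \ge 1$, the convolution identity $\sum_{k=0}^n a_{n-k} b_k = \delta_{n,0}$ becomes the recurrence
\[
a_n = \sum_{k=1}^n a_{n-k}\, q_k, \qquad n \ge 1,
\]
and the goal is reduced to proving $q_n \ge 0$ for every $n \ge 1$. The heart of the proof is an induction on $n$. The base case $q_1 = a_1 \ge 0$ is immediate. For the inductive step, I would introduce the successive ratios $\rho_j := a_j/a_{j-1}$; the hypothesis $a_{j-1} a_{j+1} \ge a_j^2$ says precisely that $\rho_j$ is nondecreasing in $j$. Hence for $1 \le k \le n-1$, $a_{n-k} = \rho_{n-k}\, a_{n-1-k} \le \rho_n\, a_{n-1-k}$, and combining this with the inductive hypothesis $q_k \ge 0$ and the recurrence at level $n-1$ yields
\[
\sum_{k=1}^{n-1} a_{n-k}\, q_k \;\le\; \rho_n \sum_{k=1}^{n-1} a_{n-1-k}\, q_k \;=\; \rho_n\, a_{n-1} \;=\; a_n,
\]
so that $q_n = a_n - \sum_{k=1}^{n-1} a_{n-k}\, q_k \ge 0$, closing the induction.

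I do not expect a serious obstacle. The only substantive input is the monotonicity of the ratios $\rho_j$, which is exactly what log-convexity supplies and which lets one dominate each term in the recurrence by the corresponding term one step lower. This is also consistent with the discussion that log-convexity is strictly stronger than CMM: the argument uses the full monotone-ratio structure, whereas the sign pattern \eqref{eq:signconditionb} characterizing CMM via Theorem \ref{thm:uniformcmm}(c) is visibly weaker.
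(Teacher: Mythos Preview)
Your proof is correct and follows essentially the same strategy as the paper: establish the sign pattern \eqref{eq:signconditionb} for the convolutional inverse and then invoke the equivalence $(a)\Leftrightarrow(c)$ in Theorem~\ref{thm:uniformcmm}. The paper does not spell out the induction for the sign of the inverse but instead defers to \cite[Lemma~2.3]{liao2020positive}; your ratio argument with $\rho_j=a_j/a_{j-1}$ is precisely the standard way to prove that lemma in the uniform case, so nothing substantively new is happening.
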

We refer the readers to \cite[Lemma 2.3]{liao2020positive} for the result on the signs of the inverse if the discrete kernel is log-convex.
We remark that for $a$ to be CMM, $b_2\le 0$ is equivalent to $a_0a_2\ge a_1^2$. Nevertheless, the log-convexity for all $j$ is clearly strong.

\section{Complementary monotone kernels on nonuniform meshes}\label{sec:Rcmmnonuniform}

We then generalize the CMM properties mentioned in section \ref{sec:cmmunifom} to the the nonuniform meshes. This will be the main tool we use in this paper to prove the monotonicity preserving properties on nonuniform meshes.

\subsection{Pseudo-convolution}\label{subsec:pc}

Let us have brief review of the pseudo-convolution discussed in \cite{fengli2023a}. We arrange the kernel $\{a_{n-j}^n\}$ into a lower triangular array $A$ of the following form
\begin{gather}\label{eq:arraykernel}
A=\begin{bmatrix}
a_0^{1} &  &  &  &  \\
a_1^{2}& a_0^{2} & & & \\
\cdots & \vdots & \vdots &  &  \\
a_{n-1}^{n} &  \cdots & a_1^{n} & a_0^{n} &\\
\cdots & \vdots & \vdots &   & \vdots\\
\end{bmatrix}.
\end{gather}
The pseudo-convolution between two such kernels $A$ and $B$ is defined to be another kernel $C:=A\pc B$, given by
\begin{gather}\label{eq:convnonuni}
c_{k}^n=\sum_{j=0}^k a_{k-j}^n b_j^{n+j-k}, \quad \text{or}\quad c_{n-k}^n=\sum_{j=k}^n a_{n-j}^n b_{j-k}^j.
\end{gather}
If $a_{j}^n=a_{j}$ and $b_{j}^n=b_j$ are both  independent of $n$, then  it reduces to the usual convolution.  By the definition, the convolution for $n\le N$ does not depend on the data with $n>N$. Hence, though the discussion here is for infinite arrays, the result  can apply to array kernels with finite data.

Consider the following special kernels
\begin{gather}\label{eq:specialkernel}
I=
\begin{bmatrix}
1 &  &    &  &\\
& 1 &  &  &\\
 &    & \vdots & & \\
& &    &1 & \\
 &  &    & & \vdots\\
\end{bmatrix},
\quad
L=
\begin{bmatrix}
1 &  &  &  &  \\
1& 1 & & &  \\
 \vdots & \vdots &  \vdots&  &  \\
1&1 & \cdots & 1 &  \\
 \vdots&\vdots  &\vdots  &\vdots  &\vdots  \\
\end{bmatrix},
\quad L^{(-1)}=
\begin{bmatrix}
1 &  &  &  &  \\
-1& 1 & & &  \\
 & -1 &  1&  &  \\
& & \vdots & 1 &  \\
&  &  &\cdots  &\vdots  \\
\end{bmatrix}.
\end{gather}
It can be verified that $I$ is the identify for the pseudo-convolution, and the following properties hold.
\begin{enumerate}[(a)]
\item The pseudo-convolution is associative.
\item For a given $A$, if a kernel $B$ satisfies $A\pc B=I$, then $B\pc A=I$.
\end{enumerate}
The kernel $B$ is actually the ROC kernel defined in \cite{liao2021analysis}.  Clearly, $B$ is both the left inverse and the right inverse of $A$ for pseudo-convolution so we may simply call it the inverse, and denote
\[
A^{(-1)}:=B, \quad \text{such that}~B\pc A=A\pc B=I.
\]
The following lemma from \cite[Lemma 4.3]{fengli2023a} is reminiscent of the M-matrices and is often useful.
\begin{lemma}\label{lmm:signofentry}
Let $B$ be the inverse of $A$. If $B$ has positive diagonal elements and nonpositive off-diagonal elements, then $A$ has nonnegative elements and the entries on the diagonal are positive. 
\end{lemma}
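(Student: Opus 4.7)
The plan is to use the pseudo-convolution identity $A \pc B = I$ as a triangular recurrence that determines $A$ from $B$, and then to read off the signs by a single induction. Fix a row index $n \ge 1$ and induct on the column index $k = 0, 1, \ldots, n-1$. The base case $k = 0$ is immediate: setting $k=0$ in \eqref{eq:convnonuni} gives $a_0^n \, b_0^n = 1$, so by the hypothesis $b_0^n > 0$ we obtain $a_0^n = 1/b_0^n > 0$, which already establishes the positivity of the diagonal entries of $A$.

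For the inductive step with $k \ge 1$, I split off the $j = 0$ term in \eqref{eq:convnonuni}, use that $(A \pc B)_k^n = I_k^n = 0$, and rearrange to obtain
$$a_k^n \, b_0^{n-k} = -\sum_{j=1}^{k} a_{k-j}^n \, b_j^{n+j-k}.$$
By the inductive hypothesis, $a_{k-j}^n \ge 0$ for every $j = 1, \ldots, k$; by the hypothesis on $B$, every $b_j^{n+j-k}$ with $j \ge 1$ is nonpositive. Each summand is therefore (nonnegative) $\times$ (nonpositive), so the right-hand side is nonnegative; dividing by $b_0^{n-k} > 0$ yields $a_k^n \ge 0$, which closes the induction and proves both assertions of the lemma.

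I do not anticipate a genuine obstacle: the whole argument is a triangular back-substitution followed by a sign check. The only bookkeeping task is to verify that the superscripts $n-k$ and $n+j-k$ appearing in the recurrence lie in the admissible range for the array $B$ (i.e., subscript at most superscript minus one), which is immediate from $1 \le j \le k \le n-1$. Structurally this mirrors the classical proof that an M-matrix has a nonnegative inverse, with the lower-triangular pseudo-convolution playing the role of back-substitution; this is presumably the analogy that the authors flag in the sentence preceding the lemma.
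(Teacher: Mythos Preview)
Your argument is correct and matches the paper's approach. The paper does not give a self-contained proof of this lemma (it is cited from \cite{fengli2023a}), but the identical induction on the triangular recurrence is sketched in the proof of Theorem~\ref{thm:uniformcmm}, part (c)$\Rightarrow$(a), for the uniform-mesh case $a_n b_0 = -\sum_{j=1}^n a_{n-j} b_j$, and the paper explicitly points to Lemma~\ref{lmm:signofentry} as the nonuniform generalization of that computation.
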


The kernel $L$ corresponds to the sequence $(1,1,\cdots)$ in the usual convolution, and $L^{(-1)}$ in \eqref{eq:specialkernel} is clearly the inverse of $L$. With this, one may define the complementary kernels. 
\begin{definition}
For a given $A$, the kernel $C_R$ with $A\pc C_R=L$ is called the right complementary kernel. The kernel $C_L$ with $C_L\pc A=L$ is called the left complementary kernel.
\end{definition}
If $A$ is invertible, then direct verification tells us that $C_R=A^{(-1)}\pc L$ and $C_L=L\pc A^{(-1)}$.
Using this fact, one has $C_R^{(-1)}=L^{(-1)}\pc A$ and $C_L^{(-1)}=A\pc L^{(-1)}$. Consequently, one has the following observation.
\begin{lemma}
Moreover, $a_j^n$ is nonincreasing in $n$ if and only if the inverse of $C_R$ has nonpositive off-diagonals; $a_j^n$ is nonincreasing in $j$ if and only if the inverse of $C_L$ has nonpositive off-diagonals.
\end{lemma}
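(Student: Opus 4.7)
The plan is to exploit the factorisations $C_R^{(-1)}=L^{(-1)}\pc A$ and $C_L^{(-1)}=A\pc L^{(-1)}$ recorded just above the lemma, and to read off the off-diagonal sign condition by a direct entrywise computation. No analytic input is needed beyond the definition of pseudo-convolution.

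The observation that makes the computation essentially a one-liner is that $L^{(-1)}$ is extremely sparse: $(L^{(-1)})_0^n=1$, $(L^{(-1)})_1^n=-1$, and every other entry vanishes. Substituting this into $c_k^n=\sum_{j=0}^k a_{k-j}^n b_j^{n+j-k}$, only the terms $j=k$ and $j=k-1$ survive. For $L^{(-1)}\pc A$ these give respectively $a_k^n$ and $-a_{k-1}^{n-1}$, so that for $k\ge 1$,
\[
(C_R^{(-1)})_k^n=a_k^n-a_{k-1}^{n-1},
\]
while $(C_R^{(-1)})_0^n=a_0^n$. Swapping the roles of the two factors, the analogous computation for $A\pc L^{(-1)}$ yields, for $k\ge 1$,
\[
(C_L^{(-1)})_k^n=a_k^n-a_{k-1}^n,
\]
with diagonal $(C_L^{(-1)})_0^n=a_0^n$.

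With these formulas both equivalences are immediate. The off-diagonals of $C_R^{(-1)}$ are nonpositive iff $a_k^n\le a_{k-1}^{n-1}$ for every $k\ge 1$, which is exactly the statement that, as one moves down a column of the array $A$ (so that the matrix column index $n-k$ is held fixed while both $n$ and $k$ increase by one), the entries $a_k^n$ do not grow; this is what ``$a_j^n$ nonincreasing in $n$'' means in the present indexing. Likewise, the off-diagonals of $C_L^{(-1)}$ are nonpositive iff $a_k^n\le a_{k-1}^n$ for every $k\ge 1$, i.e.\ $a_j^n$ is nonincreasing in $j$ for each fixed $n$.

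The only care required is the bookkeeping of the doubled index $(k,n)$ inside the pseudo-convolution sum and in pairing the correct sparse entry of $L^{(-1)}$ with the correct entry of $A$ in each of the two products, together with the benign check that the $k=0$ diagonal terms of $C_R^{(-1)}$ and $C_L^{(-1)}$ both reduce to $a_0^n$ and hence are unaffected by the off-diagonal sign condition. Beyond this indexing, there is no substantive obstacle.
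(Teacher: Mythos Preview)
Your proof is correct and follows precisely the route the paper has in mind: the paper does not give a detailed argument but simply records the factorisations $C_R^{(-1)}=L^{(-1)}\pc A$ and $C_L^{(-1)}=A\pc L^{(-1)}$ and states the lemma as an immediate ``observation''. Your explicit entrywise computation $(C_R^{(-1)})_k^n=a_k^n-a_{k-1}^{n-1}$ and $(C_L^{(-1)})_k^n=a_k^n-a_{k-1}^n$ is exactly what is implicit there, and your identification of ``$a_j^n$ nonincreasing in $n$'' with column monotonicity (i.e.\ $a_{k-1}^{n-1}\ge a_k^n$) matches the paper's Definition of column monotone.
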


One can also define the pseudo-convolution between a kernel and a vector.
Consider
\[
V=\{x=(x_1, x_2,\cdots)^T:\quad x_i\in \R \}.
\]
 Define the pseudo-convolution $\pc$: $K\times V\to V$, $y=A\pc x$
by
\begin{gather}
y_{n}=\sum_{j=1}^{n}a_{n-j}^n x_{j}.
\end{gather}
then, it holds that
$A\pc(B\pc x)=(A\pc B)\pc x$.

\subsection{Basic definitions and facts}

For the array kernels, the monotonicity of the kernels is not very straightforward now. We need to look at the columns and rows. 
\begin{definition}
Consider an array kernel $A=(a_{n-j}^n)$.  We call $A$ column monotone if it has nonnegative entries and
$a_{j-1}^{n-1}\ge a_j^n$.
We call it to be row monotone, if it has nonnegative entries and
$a_{j-1}^n\ge a_j^n$.
We call it doubly monotone if it is both column monotone and row monotone.
\end{definition}
The column monotonicity actually means that for different time $n$, the approximation of the kernel $a$ on a fixed interval $I_j=(t_{j-1}, t_j)$ is nonincreasing. The row monotonicity means that for a fixed time $n$, the approximation of the kernel $a$ is monotone over different intervals $I_j$. 

As  a generalization of the uniform mesh case, we propose the following.
\begin{definition}\label{def:quasicm}
\begin{enumerate}[(a)]
\item A column monotone kernel $A$ is called right complementary monotone (R-CMM) if its right complementary kernel $C_R$ is doubly monotone. 
\item A row monotone kernel is called left complementary monotone (L-CMM) if its left complementary kernel is doubly monotone.
\item A doubly monotone kernel $A$ is complementary monotone (CMM) if it is both R-CMM and L-CMM.
\end{enumerate}
\end{definition}

We have seen that the signs of the entries of the inverse can also be used to characterize the CMM property for uniform meshes, but this is not the case for nonuniform meshes. If $A^{(-1)}$ has positive diagonal and nonpositve off-diagonals, then $C_R$ is row monotone but cannot ensure the column monotonicity of $C_R$ so that relation \eqref{eq:rowsums} holds, which is needed in section \ref{subsec:monpreserving}. Hence, we use the complementary kernel to define the R-CMM property here.  Note that we are not requiring the kernel $A$ itself to be doubly monotone because the row monotonicity is not needed.

The pseudo-convolution for $n\le N$ is not affected by the data with $n>N$. Hence, one may consider the local versions of the CMM concepts.
 \begin{definition}
 If $A$ is column monotone for $n\le N$ and $C_R$ is doubly monotone for all $n\le N$,
 then we call $A$ to be ``local R-CMM with range $N$''. The local L-CMM and local CMM are similarly defined.
 \end{definition}
 
The results below are mainly stated for R-CMM kernels, while the ones for L-CMM can be proved similarly. 
Moreover, we only study the global CMM properties and the local versions can be easily obtained by the local feature of the pseudo-convolution.

We now give some characterizations of the R-CMM kernels.
\begin{theorem}\label{thm:qcmchar}
The following are equivalent. 
\begin{enumerate}[(a)]
\item The array kernel $A$ is R-CMM;
\item The right complementary kernel $C_R$ is doubly monotone and $C_R^{(-1)}$ has positive diagonals and nonpositive off-diagonals;
\item  $A$ is column monotone, and both $A^{(-1)}$ and $(L^{(-1)}\pc A\pc L)^{(-1)}$ have positive diagonals and nonpositive off-diagonals.
\end{enumerate}
\end{theorem}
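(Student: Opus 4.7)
The plan is to reduce each of (b) and (c) to an entrywise sign/difference pattern of $A$ or $C_R$, using three identities that flow from $A\pc C_R=L$ together with $L\pc L^{(-1)}=I$:
\[
C_R^{(-1)}=L^{(-1)}\pc A,\qquad A^{(-1)}=C_R\pc L^{(-1)},\qquad (L^{(-1)}\pc A\pc L)^{(-1)}=L^{(-1)}\pc C_R.
\]
I would first record, for any array kernel $X$, the explicit entries
\[
(L^{(-1)}\pc X)_k^n = X_k^n - X_{k-1}^{n-1},\qquad (X\pc L^{(-1)})_k^n = X_k^n - X_{k-1}^n\qquad (k\ge 1),
\]
with both equal to $X_0^n$ on the diagonal. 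These are the one-step differences of $X$ in the diagonal and horizontal directions respectively, and they drop out of the convolution formula together with the two nonzero entries of $L^{(-1)}$.

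Substituting the three identities into the two formulas above yields a dictionary between sign patterns and monotonicity: $(C_R^{(-1)})_k^n=a_k^n-a_{k-1}^{n-1}$ is nonpositive iff $A$ is column monotone; $(A^{(-1)})_k^n=(C_R)_k^n-(C_R)_{k-1}^n$ is nonpositive iff $C_R$ is row monotone; and $((L^{(-1)}\pc A\pc L)^{(-1)})_k^n=(C_R)_k^n-(C_R)_{k-1}^{n-1}$ is nonpositive iff $C_R$ is column monotone. The diagonal entries $(C_R^{(-1)})_0^n=a_0^n$ and $(A^{(-1)})_0^n=(C_R)_0^n$ turn the positive-diagonal conditions into $a_0^n>0$ and $(C_R)_0^n>0$. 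With this dictionary in hand, the forward implications (a)$\Rightarrow$(b) and (a)$\Rightarrow$(c) are immediate read-offs from the R-CMM assumption, which already supplies column monotonicity of $A$ and double monotonicity of $C_R$.

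For the reverse directions (b)$\Rightarrow$(a) and (c)$\Rightarrow$(a), the one extra ingredient is to upgrade ``inverse has nonpositive off-diagonals'' to ``the array itself has nonnegative entries'', which is exactly Lemma \ref{lmm:signofentry}. For (b)$\Rightarrow$(a), I would apply the lemma to $A^{(-1)}=C_R\pc L^{(-1)}$, whose diagonal $(C_R)_0^n>0$ and off-diagonal $(C_R)_k^n-(C_R)_{k-1}^n\le 0$ (by row monotonicity of $C_R$) give $A$ nonnegative, while column monotonicity of $A$ comes from the dictionary applied to $C_R^{(-1)}$. For (c)$\Rightarrow$(a), I would apply the lemma instead to $C_R^{(-1)}=L^{(-1)}\pc A$, whose required sign pattern is furnished by column monotonicity of $A$; this yields nonnegativity of $C_R$, which combined with the row and column monotonicity already read off from (c) gives double monotonicity of $C_R$. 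The main obstacle is really only clerical: keeping straight the two shift patterns (diagonal versus horizontal) so that column monotonicity of $C_R$ is not confused with row monotonicity, and noting that $a_0^n\ne 0$ (needed for $C_R$ to exist) together with $a_0^n\ge 0$ from column monotonicity forces strict positivity $a_0^n>0$, which is what the dictionary demands.
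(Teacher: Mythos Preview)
Your proposal is correct and follows essentially the same route as the paper: both arguments rest on the three identities $C_R^{(-1)}=L^{(-1)}\pc A$, $A^{(-1)}=C_R\pc L^{(-1)}$, $(L^{(-1)}\pc A\pc L)^{(-1)}=L^{(-1)}\pc C_R$ together with Lemma~\ref{lmm:signofentry}. Your write-up is simply more explicit, spelling out the entrywise difference formulas and the resulting ``dictionary'' that the paper leaves to the reader.
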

\begin{proof}

The equivalence between (a) and (b) follows from Definition \ref{def:quasicm}  and the fact $C_R^{(-1)}=L^{(-1)}\pc A$.
From (b) to (c), one only has to use the fact $C_R^{(-1)}=L^{(-1)}\pc A$ and apply Lemma \ref{lmm:signofentry}. From (c) to (b), one apply directly the observation $A^{(-1)}=C_R\pc L^{(-1)}$
and $(L^{(-1)}\pc A\pc L)^{(-1)}=L^{(-1)}\pc C_R$. We omit the details.
\end{proof}

Clearly, the column monotonicity of $A$ is equivalent to the nonpositivity of off-diagonals in $C_R^{(-1)}$. This, together with the positive diagonals, implies that $C_R$ must have nonnegative elements. We remark, however, the column monotonicity of $A$ is clearly stronger than the nonnegativity of the elements of $C_R$.  

Below, we present some necessary conditions hidden in Theorem \ref{thm:qcmchar} above.
\begin{corollary}\label{cor:Rcmmproperties}
Suppose $A$ is R-CMM. Then, the following facts hold.
\begin{enumerate}[(1)]
\item The kernel $L^{(-1)}\pc A\pc L$ has nonnegative entries, and it implies that for each $n\ge 1$,
\begin{gather}\label{eq:rccmon}
\sum_{j=k}^{n+1}a_{n+1-j}^{n+1} \ge \sum_{j=k}^{n}a_{n-j}^{n}, \quad 
1\le k\le n-1.
\end{gather}
Moreover, it is row monotone.

\item Let $A^{(-1)}=B=(b_{n-j}^n)$. Then It holds that
\begin{gather}\label{eq:sumb}
b_0^n>0, \quad b_{n-j}^n\le0, \quad \sum_{j=1}^nb_{n-j}^n\ge 0, \forall n\ge 1\quad j<n.
\end{gather}
\end{enumerate}
\end{corollary}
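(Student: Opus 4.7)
The plan is to deduce both assertions from the characterization in Theorem \ref{thm:qcmchar}(c), which already tells us that both $B:=A^{(-1)}$ and $(L^{(-1)}\pc A\pc L)^{(-1)}$ have positive diagonals and nonpositive off-diagonals. The first of these immediately yields $b_0^n>0$ and $b_{n-j}^n\le 0$ for $j<n$, which is most of part (2). The remaining row-sum condition $\sum_{j=1}^n b_{n-j}^n\ge 0$ will come from the right complementary kernel: since $C_R=B\pc L$, a direct computation gives $(C_R)_k^n=\sum_{i=0}^k b_i^n$, and because $A$ is R-CMM, $C_R$ is doubly monotone and in particular has nonnegative entries. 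Taking $k=n-1$ and reindexing $i=n-j$ is precisely the stated inequality.

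For part (1), I will first apply Lemma \ref{lmm:signofentry} to $(L^{(-1)}\pc A\pc L)^{(-1)}$ to conclude that $D:=L^{(-1)}\pc A\pc L$ has nonnegative entries with positive diagonal. To translate this into the concrete inequality \eqref{eq:rccmon}, I will compute the entries of $D$ explicitly. Since $L$ is the all-ones lower-triangular array, right pseudo-convolution by $L$ produces partial row sums, $(A\pc L)_k^n=\sum_{i=0}^k a_i^n$, and then left pseudo-convolving by $L^{(-1)}$, whose only nonzero entries are $1$'s on the diagonal and $-1$'s on the subdiagonal, yields for $k\ge 1$
\begin{gather*}
D_k^n=\sum_{i=0}^k a_i^n-\sum_{i=0}^{k-1}a_i^{n-1}.
\end{gather*}
Nonnegativity of $D_k^n$ rearranges, after relabeling via $i=n+1-j$ and shifting $n\mapsto n+1$, directly into \eqref{eq:rccmon}. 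For the row monotonicity of $D$, consecutive entries telescope cleanly to $D_{j-1}^n-D_j^n=a_{j-1}^{n-1}-a_j^n\ge 0$, where the last inequality is precisely the column monotonicity of $A$.

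I do not anticipate any genuine conceptual obstacle in this proof; the content of the corollary is essentially a restatement of Theorem \ref{thm:qcmchar} and Lemma \ref{lmm:signofentry} in the language of partial row sums of $A$ and $B$, and the only real work is careful bookkeeping with the pseudo-convolution to extract the stated inequalities from the sign patterns of $A^{(-1)}$ and $(L^{(-1)}\pc A\pc L)^{(-1)}$.
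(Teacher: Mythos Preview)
Your proof is correct and follows exactly the approach the paper intends: the corollary is presented there as ``necessary conditions hidden in Theorem~\ref{thm:qcmchar},'' with only the remark that \eqref{eq:sumb} is ``just reinterpretation of the signs for the elements of complementary kernels,'' and you have supplied precisely those computations via Theorem~\ref{thm:qcmchar}(c), Lemma~\ref{lmm:signofentry}, and the explicit formula $(C_R)_k^n=\sum_{i=0}^k b_i^n$. The row-monotonicity calculation $D_{j-1}^n-D_j^n=a_{j-1}^{n-1}-a_j^n$ is the right telescoping identity and uses column monotonicity of $A$ exactly as required.
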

The claims in \eqref{eq:sumb} are just reinterpretation of the signs for the elements of complementary kernels.  The condition \eqref{eq:rccmon} is actually very natural, since 
\[
 \sum_{j=k}^{n}a_{n-j}^{n} \approx
\int_0^{\sum_{j=k}^n\tau_j}a(s)\,ds.
\]

Below, we give a sufficient condition for $A$ to be R-CMM. 
We recall a basic result in \cite[Lemma 2.3]{liao2020positive}:
\begin{lemma}
If a kernel $\tilde{A}=(\tilde{a}_{n-j}^n)$ has positive entries such that 
\begin{gather}\label{eq:logconv}
\tilde{a}_{j-1}^{n-1} \tilde{a}_{j+1}^{n} \ge \tilde{a}_j^{n} \tilde{a}_j^{n-1},
\end{gather}
then the inverse has positive diagonal elements and nonpositive off-diagonal elements. If moreover, $\tilde{A}$ is column monotone, then the right complementary kernel $\tilde{C}_R$ has nonnegative elements so that $\tilde{C}_R$ is row monotone. 
\end{lemma}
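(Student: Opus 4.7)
My plan is to prove the two assertions in sequence, with the first (sign pattern of $\tilde{A}^{(-1)}$) being the main technical work and the second (nonnegativity and row monotonicity of $\tilde{C}_R$) following more routinely from it. For the first, let $\tilde{B}:=\tilde{A}^{(-1)}$ with entries $\tilde{b}_{n-j}^n$. Writing out $\tilde{A}\pc\tilde{B}=I$ entrywise, the diagonal relation gives $\tilde{b}_0^n=1/\tilde{a}_0^n>0$, and at off-diagonal lag $p:=n-j\ge 1$ one obtains the recursion
\begin{equation*}
\tilde{a}_0^n\tilde{b}_p^n \;=\; -\tilde{a}_p^n\tilde{b}_0^{n-p} \;-\; \sum_{l=1}^{p-1}\tilde{a}_{p-l}^n\,\tilde{b}_l^{\,n-p+l}.
\end{equation*}
The base case $p=1$ is direct: $\tilde{b}_1^n=-\tilde{a}_1^n/(\tilde{a}_0^n\tilde{a}_0^{n-1})<0$. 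I would then proceed by induction on $p$ to conclude $\tilde{b}_p^n\le 0$ for every $n\ge p\ge 1$.

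I expect the main obstacle to be that the naive induction ``$\tilde{b}_l^m\le 0$ for $l<p$'' does not close: in the recursion, the terms with $l\ge 1$ contribute with sign opposite to the leading term $-\tilde{a}_p^n\tilde{b}_0^{n-p}$ and so must be controlled quantitatively against it. The cleanest remedy is to pass through a determinantal identity: by Cramer's rule, $\tilde{b}_p^n=(-1)^p D_p^n/\prod_{m=n-p}^{n}\tilde{a}_0^m$, where $D_p^n$ is the $p\times p$ minor $\det\bigl(\tilde{A}_{[n-p+1:n],\,[n-p:n-1]}\bigr)$, so the claim $\tilde{b}_p^n\le 0$ is equivalent to $D_p^n$ having sign $(-1)^{p-1}$. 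The log-convexity hypothesis $\tilde{a}_{j-1}^{n-1}\tilde{a}_{j+1}^n\ge\tilde{a}_j^n\tilde{a}_j^{n-1}$ is precisely the statement $D_2^n\le 0$, and the sign pattern of $D_p^n$ for $p\ge 3$ can be propagated from $D_2^n\le 0$ via a Sylvester/Desnanot--Jacobi identity relating minors of consecutive sizes. An alternative route is to work directly with the recursion and a strengthened quantitative hypothesis such as $0\le -\tilde{b}_p^n\le \tilde{a}_p^n/(\tilde{a}_0^n\tilde{a}_0^{n-p})$; the upper bound then comes for free once lower-lag values are nonpositive, while establishing the lower bound is the crux and requires iterating the log-convexity inequality along anti-diagonals.

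For the second claim, I would handle the two properties of $\tilde{C}_R$ separately, so as to not require Part 1 for nonnegativity. Since $\tilde{C}_R=\tilde{A}^{(-1)}\pc L$, its entries are prefix sums $(\tilde{C}_R)_p^n=\sum_{l=0}^p\tilde{b}_l^n$, so consecutive entries along a row differ by $\tilde{b}_p^n\le 0$; this gives the row monotonicity as an immediate corollary of Part 1. For nonnegativity I would instead use the dual representation $\tilde{C}_R^{(-1)}=L^{(-1)}\pc\tilde{A}$: its diagonal entries are $\tilde{a}_0^n>0$, while its off-diagonal entry at $(n,k)$ with $k<n$ equals $\tilde{a}_{n-k}^n-\tilde{a}_{n-k-1}^{n-1}$, which is nonpositive precisely by the column monotonicity of $\tilde{A}$. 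Lemma \ref{lmm:signofentry} applied to $\tilde{C}_R^{(-1)}$ then yields that $\tilde{C}_R$ has nonnegative entries with positive diagonal, completing the proof.
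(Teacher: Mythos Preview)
Your treatment of the second assertion is correct and matches the paper's reading of the cited reference: nonnegativity of $\tilde{C}_R$ comes from the sign pattern of $\tilde{C}_R^{(-1)}=L^{(-1)}\pc\tilde{A}$ together with Lemma~\ref{lmm:signofentry} (this is exactly where column monotonicity is used), and row monotonicity of $\tilde{C}_R$ then follows from $(\tilde{C}_R)_p^n-(\tilde{C}_R)_{p-1}^n=\tilde{b}_p^n\le 0$. Note that the paper itself does not prove the lemma; it cites \cite[Lemma~2.3]{liao2020positive}.

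For the first assertion, however, neither of your two routes closes as written. The Desnanot--Jacobi approach stalls at even $p$: with $D_p^n:=\det(\tilde{A}_{[n-p+1:n],[n-p:n-1]})$ the identity reads
\[
D_p^n\,D_{p-2}^{n-1}=D_{p-1}^n\,D_{p-1}^{n-1}-\det\!\bigl(\tilde{A}_{[n-p+2:n],[n-p:n-2]}\bigr)\cdot\textstyle\prod_{m=n-p+1}^{n-1}\tilde{a}_0^m,
\]
and for even $p$ the right-hand side is a difference of two nonnegative quantities, so the sign of $D_p^n$ cannot be read off (the extra minor is not in the $D$-family, and tracking its sign spawns further shifted minors without terminating). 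Your strengthened induction hypothesis $-\tilde{b}_p^n\le \tilde{a}_p^n/(\tilde{a}_0^n\tilde{a}_0^{n-p})$ is likewise too weak: using it at $p=3$ would require $\tilde{a}_2^n\tilde{a}_1^{n-2}/\tilde{a}_0^{n-2}+\tilde{a}_1^n\tilde{a}_2^{n-1}/\tilde{a}_0^{n-1}\le\tilde{a}_3^n$, which fails already for the uniform log-convex kernel $\tilde{a}_k=1/(k+1)$ (left side $=\tfrac13$, right side $=\tfrac14$).

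The argument that does work (and is the one in \cite{liao2020positive}) is an iterative elimination rather than a single strengthened bound. Starting from $\tilde{a}_0^n\tilde{b}_p^n=-\sum_{l=0}^{p-1}\tilde{a}_{p-l}^n\tilde{b}_l^{n-p+l}$ and the analogous relation at level $n-1$, subtract $\rho_1^n:=\tilde{a}_1^n/\tilde{a}_0^{n-1}$ times the latter; the $l=p-1$ term cancels, and the remaining coefficients become $\tilde{a}_{p-l}^n-\rho_1^n\tilde{a}_{p-1-l}^{n-1}=\tilde{a}_{p-1-l}^{n-1}(\rho_{p-l}^n-\rho_1^n)\ge 0$ by the log-convexity (monotonicity of $\rho_j^n$ in $j$). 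Repeating this peeling against the relation at level $n-2$, then $n-3$, etc., one checks that each new set of coefficients stays nonnegative (this is your ``iterating along anti-diagonals'', made precise: the $k$th step requires $\rho_{m-1}^{n-1}\cdots\rho_{m-k+1}^{n-k+1}(\rho_m^n-\rho_1^n)\ge \rho_1^{n-1}\cdots\rho_1^{n-k+1}(\rho_k^n-\rho_1^n)$-type inequalities, all consequences of $\rho_j^n$ increasing in $j$). After $p-1$ steps only the $l=0$ term survives with a nonnegative coefficient, giving $\tilde{b}_p^n\le 0$.
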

Note that the statement here is slight different from that in \cite[Lemma 2.3]{liao2020positive}.
In \cite[Lemma 2.3]{liao2020positive}, the conditions of column monotonicity and \eqref{eq:logconv} are proposed together. However, if we go over the proof, one can find that the column monotonicity is used for the signs of RCC kernels, namely the last inequality in \eqref{eq:sumb}. Moreover, in \cite[Lemma 2.3]{liao2020positive}, they assumed strict monotonicity along columns, which is not needed by us.

As mentioned, we need $C_R$ to be column monotone. The condition
\eqref{eq:logconv} imposed on $A$ seems not enough for $A$ to be R-CMM.  We need to put conditions on $L^{(-1)}\pc A\pc L$ as well. Hence, a sufficient condition would be the following.
\begin{proposition}\label{pro:suffcond}
If the following conditions are satisfied:
\begin{enumerate}[(1)]
\item $A$ is column monotone;
\item the kernels $A$ and $L^{(-1)}\pc A\pc L$ both have positive elements and both satisfy \eqref{eq:logconv};
\end{enumerate}
then $A$ is R-CMM.
\end{proposition}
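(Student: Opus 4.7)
The plan is to reduce everything to the equivalence (a)$\Leftrightarrow$(c) in Theorem \ref{thm:qcmchar}, which says that $A$ is R-CMM exactly when three things hold: $A$ is column monotone, $A^{(-1)}$ has positive diagonals and nonpositive off-diagonals, and the same sign pattern holds for the inverse of $\tilde A := L^{(-1)}\pc A\pc L$. Assumption (1) immediately gives the column monotonicity, so the real content is to establish the sign structure of the two inverses.

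The key observation is that the lemma stated immediately before the proposition already packages the log-convexity-to-sign-structure implication: any kernel that has positive entries and satisfies \eqref{eq:logconv} has an inverse with positive diagonal and nonpositive off-diagonals. Thus for $A^{(-1)}$ the argument is to invoke this lemma with the kernel $A$ itself, which satisfies the needed hypotheses by the first half of assumption (2). For $\tilde A^{(-1)}$ the argument is identical: invoke the same lemma with $\tilde A = L^{(-1)}\pc A\pc L$, using the second half of assumption (2) which asserts that this kernel also has positive entries and satisfies \eqref{eq:logconv}.

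Assembling these three ingredients (column monotonicity of $A$, correct sign pattern of $A^{(-1)}$, correct sign pattern of $\tilde A^{(-1)}$) and applying Theorem \ref{thm:qcmchar}(c) finishes the proof. The structure of the proposition is essentially a clean repackaging of Theorem \ref{thm:qcmchar}(c) together with the preceding sign-of-inverse lemma, so no genuine obstacle is expected; the design choice to assume log-convexity for the auxiliary kernel $L^{(-1)}\pc A\pc L$ rather than deriving it from log-convexity of $A$ alone is exactly what makes the argument transparent, since in general log-convexity of $A$ does not propagate to $\tilde A$ and thus must be checked separately in applications.
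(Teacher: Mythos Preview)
Your proposal is correct and matches the paper's intended argument exactly: the proposition is presented without a written proof precisely because it follows immediately from Theorem \ref{thm:qcmchar}(c) together with the preceding sign-of-inverse lemma, applied once to $A$ and once to $L^{(-1)}\pc A\pc L$. There is nothing to add.
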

Note that we are not requiring the column monotonicity for $L^{(-1)}\pc A\pc L$, which may not hold for schemes considered. We believe that the conditions in Proposition \ref{pro:suffcond} are kind of strong in general. Nevertheless, we will use Proposition \ref{pro:suffcond} for the example in section \ref{sec:ex}.

The R-CMM could be preserved under a certain scaling transform. In fact, we have the following. 
\begin{lemma}
Suppose $A$ is R-CMM and $\tau$ is diagonal as
\begin{gather}\label{eq:tau}
\tau=
\begin{bmatrix}
\tau_1 &  &    &  &\\
& \tau_2 &  &  &\\
 &    & \vdots & & \\
& &    &\tau_j & \\
 &  &    & & \vdots\\
\end{bmatrix},
\end{gather}
with the property that $0<\tau_1\le \tau_2\le \tau_3\cdots$. Then, $A\pc \tau$ is R-CMM.
\end{lemma}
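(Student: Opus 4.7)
The plan is to verify the two ingredients of Definition~\ref{def:quasicm}(a) for $A' := A \pc \tau$: column monotonicity of $A'$, and double monotonicity of its right complementary kernel. The central observation is that pseudo-convolution with a diagonal array reduces to a pure rescaling. Direct inspection of \eqref{eq:convnonuni} gives
\[
(A \pc \tau)^n_{n-k} \;=\; \tau_k\, a^n_{n-k},
\]
so $A'$ is obtained from $A$ by scaling the $k$-th column by the constant $\tau_k$. The inverse of $\tau$ is also diagonal, with entries $\tau_j^{-1}$, and for any array $C$ one has $(\tau^{(-1)} \pc C)^n_{n-k} = \tau_n^{-1}\, C^n_{n-k}$, i.e.\ it scales the $n$-th row by $\tau_n^{-1}$.

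Column monotonicity of $A'$ is then immediate, since along any fixed column $k$ the entries are all multiplied by the same constant $\tau_k$, so the inequality $a^{n-1}_{j-1} \ge a^n_j$ transfers from $A$ to $A'$. For the right complementary kernel, rather than compute $(A')^{(-1)}$ directly I would guess and check. Define $C_R' := \tau^{(-1)} \pc C_R$; then by associativity of $\pc$,
\[
A' \pc C_R' \;=\; A \pc \tau \pc \tau^{(-1)} \pc C_R \;=\; A \pc C_R \;=\; L,
\]
so $C_R'$ really is the right complementary kernel of $A'$, and its entries are $(C_R')^n_{n-k} = \tau_n^{-1} (C_R)^n_{n-k}$.

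It remains to verify that $C_R'$ is doubly monotone. Row monotonicity is automatic: along any fixed row $n$ the rescaling factor $\tau_n^{-1}$ is constant, so the row monotonicity of $C_R$ transfers directly. Column monotonicity is the only step that uses the ordering hypothesis $0 < \tau_1 \le \tau_2 \le \cdots$: the desired inequality $(C_R')^n_{n-k} \ge (C_R')^{n+1}_{n+1-k}$ rearranges to $\tau_{n+1}(C_R)^n_{n-k} \ge \tau_n (C_R)^{n+1}_{n+1-k}$, which I would establish by the two-step chain
\[
\tau_{n+1}(C_R)^n_{n-k} \;\ge\; \tau_n (C_R)^n_{n-k} \;\ge\; \tau_n (C_R)^{n+1}_{n+1-k},
\]
the first inequality using $\tau_{n+1}\ge \tau_n$ together with $(C_R)^n_{n-k} \ge 0$, and the second using the column monotonicity of $C_R$. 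The only place where anything can go wrong is the bookkeeping of pseudo-convolution with diagonal arrays; once that identity is in hand, the monotonicity checks are all one-line comparisons, and no further structural property of $A$ beyond its R-CMM-ness is needed.
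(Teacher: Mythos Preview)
Your proof is correct and follows essentially the same route as the paper's: both identify the right complementary kernel of $A\pc\tau$ as $\tau^{(-1)}\pc C_R$ and verify its double monotonicity, with the ordering $\tau_1\le\tau_2\le\cdots$ used precisely for the column-monotonicity step. The paper phrases this verification through the equivalent criterion of Theorem~\ref{thm:qcmchar}(c) (checking the sign structure of $(A\pc\tau)^{(-1)}=\tau^{(-1)}\pc A^{(-1)}$ and of $(L^{(-1)}\pc A\pc\tau\pc L)^{(-1)}=L^{(-1)}\pc\tau^{(-1)}\pc C_R$), whereas you work directly from Definition~\ref{def:quasicm}(a); the underlying computation is the same.
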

\begin{proof}
For any diagonal kernel with $\tau_i>0$, $A\pc \tau$ is again column monotone.
Moreover, $(A\pc \tau)^{(-1)}=\tau^{(-1)}\pc A^{(-1)}$. Hence, the signs of the elements in inverse of $A\pc \tau$ are as desired. Moreover, 
\[
(L^{(-1)}\pc A\pc\tau\pc L)^{(-1)}=L^{(-1)}\pc \tau^{(-1)}\pc A^{(-1)}\pc L
=L^{(-1)}\pc \tau^{(-1)}\pc C_R.
\]
Note that $C_R$ is doubly monotone. Clearly, if $\tau_1\le \tau_2\le \tau_3\cdots$,
then $\tau^{(-1)}\pc C_R$ is column monotone. Then, $(L^{(-1)}\pc A\pc\tau\pc L)^{(-1)}$ has the desired signs. The result then follows from Theorem \ref{thm:qcmchar}.
\end{proof}

Note that, under this scaling, \eqref{eq:logconv} and the column monotonicity of $A$ are invariant (the doubly monotonicity is hard to preserve under the scaling though). Hence, the property R-CMM is kind of robust under such a scaling. (The condition \eqref{eq:logconv} for $L^{(-1)}\pc A\pc L$ may be broken, though.)

\subsection{R-CMM versus complete positivity}

Next, we consider the resolvent kernels for nonuniform meshes using the pseudo-convolution:
\begin{gather}\label{eq:disreldef}
R_{\lambda}+\lambda R_{\lambda}\pc A=\lambda A
\Longleftrightarrow
A-R_{\lambda}\pc A=\frac{1}{\lambda}R_{\lambda}.
\end{gather}
\begin{lemma}\label{lmm:Rbasics}
Suppose the diagonal elements of $A$ are positive and its right complementary kernel is $C_R$. Then, the resolvent $R_{\lambda}$ defined by \eqref{eq:disreldef} always exists for $\lambda>0$.
Moreover, the following holds:
\begin{enumerate}[(a)]
\item $R_{\lambda}\pc A=A\pc R_{\lambda}$, $R_{\lambda}\pc A^{(-1)}=A^{(-1)}\pc R_{\lambda}$;
\item $I-R_{\lambda}=(I+\lambda A)^{(-1)}=\lambda^{-1} R_{\lambda}\pc A^{(-1)}$;
\end{enumerate}
\end{lemma}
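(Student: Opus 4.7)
The plan is to first establish existence by inverting $I+\lambda A$, then derive both (a) and (b) as algebraic consequences of the fact that every kernel appearing in the statement lies in the commutative subalgebra generated by $A$ and the identity $I$ under $\pc$. I would begin by rewriting the defining relation \eqref{eq:disreldef} as $R_\lambda \pc (I+\lambda A) = \lambda A$. Since $A$ has positive diagonal by hypothesis, the array $I + \lambda A$ has strictly positive diagonal entries $1 + \lambda a_0^n > 0$ for every $\lambda > 0$. Such a lower triangular array admits a pseudo-convolutional inverse (entries may be solved row by row), so setting $R_\lambda := \lambda A \pc (I + \lambda A)^{(-1)}$ produces a solution of the defining equation.

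The key lemma for (a) is that $A$ itself commutes with $(I+\lambda A)^{(-1)}$ under $\pc$. Expanding both sides of the two-sided inverse identity $(I+\lambda A)^{(-1)} \pc (I+\lambda A) = I = (I+\lambda A) \pc (I+\lambda A)^{(-1)}$ and cancelling the common term $(I+\lambda A)^{(-1)}$ yields
\[
\lambda (I+\lambda A)^{(-1)} \pc A = \lambda A \pc (I+\lambda A)^{(-1)},
\]
so $R_\lambda$ admits the alternative representation $\lambda (I+\lambda A)^{(-1)} \pc A$. Associativity of $\pc$ then gives $R_\lambda \pc A = A \pc R_\lambda$ directly. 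The commutation $R_\lambda \pc A^{(-1)} = A^{(-1)} \pc R_\lambda$ follows by sandwiching this identity between two copies of $A^{(-1)}$; here $A^{(-1)}$ exists because the diagonal of $A$ is positive.

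For (b), the identity $I - R_\lambda = (I+\lambda A)^{(-1)}$ is immediate: expanding $(I+\lambda A) \pc (I+\lambda A)^{(-1)} = I$ gives $(I+\lambda A)^{(-1)} + \lambda A \pc (I+\lambda A)^{(-1)} = I$, and the second term on the left is precisely $R_\lambda$. The remaining equality $(I+\lambda A)^{(-1)} = \lambda^{-1} R_\lambda \pc A^{(-1)}$ then follows by substituting the explicit form of $R_\lambda$ and using the commutativity from (a) to shift $A^{(-1)}$ past $(I+\lambda A)^{(-1)}$. The only genuine obstacle throughout is that $\pc$ is not commutative in general, so each reordering has to be justified by appealing to the specific fact that $A$, $A^{(-1)}$, $(I+\lambda A)^{(-1)}$ and $R_\lambda$ all mutually commute, which is exactly what the calculation above verifies.
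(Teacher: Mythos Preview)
Your argument is correct. The paper states this lemma without proof (it is treated as routine, with the surrounding facts about pseudo-convolution inverses having been set up in Section~\ref{subsec:pc}), so there is nothing to compare against; your route via the two-sided invertibility of $I+\lambda A$ and the resulting commutation $A\pc(I+\lambda A)^{(-1)}=(I+\lambda A)^{(-1)}\pc A$ is exactly the kind of direct verification the paper's framework anticipates.
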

The following describes the asymptotic behavior of the resolvents, which could be insightful and is needed in section \ref{subsec:monpreserving} and has been proved in \cite{fengli2023a}.
\begin{lemma}\label{lmm:resolvasym}
Suppose that $A$ is invertible. The resolvent $R_{\lambda}$ satisfies the following as $\lambda\to \infty$:
\[
R_{\lambda}=I-\lambda^{-1}A^{(-1)}+O(\lambda^{-2}).
\]
The $O(\lambda^{-2})$ is elementwise under the limit $\lambda\to +\infty$.
\end{lemma}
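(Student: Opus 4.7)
The plan is to iterate the identity from Lemma \ref{lmm:Rbasics}(b), $I - R_\lambda = \lambda^{-1} R_\lambda \pc A^{(-1)}$, exactly once. Rewriting this as $R_\lambda = I - \lambda^{-1} R_\lambda \pc A^{(-1)}$ and substituting the right-hand side into itself, using bilinearity and associativity of the pseudo-convolution, yields
\begin{gather*}
R_\lambda = I - \lambda^{-1} A^{(-1)} + \lambda^{-2} R_\lambda \pc A^{(-1)} \pc A^{(-1)}.
\end{gather*}
The factor $A^{(-1)} \pc A^{(-1)}$ is a fixed lower triangular array independent of $\lambda$. Since each entry of a pseudo-convolution product is a finite sum involving only finitely many entries of its factors, the desired elementwise bound $R_\lambda = I - \lambda^{-1} A^{(-1)} + O(\lambda^{-2})$ reduces to showing that every entry of $R_\lambda$ is uniformly bounded as $\lambda \to +\infty$.

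To prove this boundedness, I would induct on the band index of $R_\lambda$ using the defining equation $R_\lambda + \lambda R_\lambda \pc A = \lambda A$. At the diagonal position $(n,0)$, the equation reduces to $(R_\lambda)_0^n(1 + \lambda a_0^n) = \lambda a_0^n$, giving $(R_\lambda)_0^n = \lambda a_0^n / (1 + \lambda a_0^n) \in [0,1)$, which is uniformly bounded and in fact converges to $1$. For a general position $(n, n-k)$ with $k < n$, the same equation yields
\begin{gather*}
(R_\lambda)_{n-k}^n = \frac{\lambda a_{n-k}^n - \lambda \sum_{j=k+1}^{n}(R_\lambda)_{n-j}^n \, a_{j-k}^j}{1 + \lambda a_0^k}.
\end{gather*}
All entries $(R_\lambda)_{n-j}^n$ with $j > k$ appearing on the right-hand side lie on bands strictly closer to the diagonal than $(n,n-k)$ and thus are controlled by the induction hypothesis. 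Since $a_0^k > 0$ (guaranteed by the invertibility of $A$), the denominator grows like $\lambda$, so each quotient is $O(1)$ uniformly in large $\lambda$.

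Combining these two ingredients delivers the claimed expansion with an elementwise $O(\lambda^{-2})$ remainder. The main obstacle is really only the inductive bookkeeping in the second step, namely ensuring that the bands used to control a given entry are all closer to the diagonal and hence already under control. This is automatic from the lower triangular structure, and the positivity of the diagonal of $A$ makes the division by $1 + \lambda a_0^k$ harmless in the limit $\lambda \to +\infty$.
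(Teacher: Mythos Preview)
Your argument is correct and is the natural one: iterate the identity $R_\lambda=I-\lambda^{-1}R_\lambda\pc A^{(-1)}$ once and then check that the entries of $R_\lambda$ stay bounded as $\lambda\to+\infty$ via the triangular recursion coming from $R_\lambda+\lambda R_\lambda\pc A=\lambda A$. The paper does not give its own proof of this lemma; it simply cites \cite{fengli2023a}, so there is no in-paper argument to compare with.

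One small slip: invertibility of $A$ only guarantees $a_0^k\neq 0$, not $a_0^k>0$, so the claim $(R_\lambda)_0^n\in[0,1)$ need not hold in general. This does not affect your proof, since for large $\lambda$ one still has $|1+\lambda a_0^k|\ge \tfrac12\lambda|a_0^k|$, which is all the induction needs. In the context where the lemma is actually used (Theorems \ref{thm:nummon1} and \ref{thm:nummon2}), the kernel $A$ is R-CMM and hence does have positive diagonal, so your stronger statement is valid there; just be aware that the lemma as stated only assumes invertibility.
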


Similar to Definition \ref{def:uniformcp}, the complete positivity was introduced in \cite{fengli2023a}.
\begin{definition}
An array kernel $A$ is completely positive if $0<(R_{\lambda})_0^n<1$, $(R_{\lambda})_{n-j}^n\ge 0$ and $\sum_{j=1}^n (R_{\lambda})_{n-j}^n\le 1$ for all $\lambda>0$. 
\end{definition}
It has been shown in \cite{fengli2023a} that 
\begin{lemma}\label{lmm:resolventsigns}
Let $A$ be an invertible array kernel and $B=A^{(-1)}=(b_{n-j}^n)$.
Then, $A$ is completely positive if and only if the conditions \eqref{eq:sumb} hold. 
\end{lemma}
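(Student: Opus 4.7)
The plan is to combine the identity $R_\lambda = (I + \lambda^{-1}B)^{(-1)}$, which follows by rearranging $I - R_\lambda = \lambda^{-1}R_\lambda\pc B$ from Lemma \ref{lmm:Rbasics}(b) as $R_\lambda\pc(I+\lambda^{-1}B)=I$, with the asymptotic expansion $R_\lambda = I - \lambda^{-1}B + O(\lambda^{-2})$ from Lemma \ref{lmm:resolvasym}. These two facts give the two directions of the equivalence.

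For necessity, I assume $A$ is completely positive and read off sign information from the expansion as $\lambda\to\infty$. The condition $(R_\lambda)_0^n>0$, together with the diagonal identity $(R_\lambda)_0^n = \lambda a_0^n/(1+\lambda a_0^n)$ (obtained from the diagonal of $R_\lambda + \lambda R_\lambda\pc A = \lambda A$), forces $a_0^n>0$ and hence $b_0^n = 1/a_0^n>0$. The entrywise expansion $(R_\lambda)_{n-j}^n = -\lambda^{-1}b_{n-j}^n + O(\lambda^{-2})\ge 0$ for $j<n$ and large $\lambda$ gives $b_{n-j}^n\le 0$. The row-sum expansion $\sum_{j=1}^n(R_\lambda)_{n-j}^n = 1 - \lambda^{-1}\sum_{j=1}^n b_{n-j}^n + O(\lambda^{-2})\le 1$ forces $\sum_{j=1}^n b_{n-j}^n\ge 0$; this is precisely \eqref{eq:sumb}.

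For sufficiency, the conditions \eqref{eq:sumb} make $M_\lambda := I + \lambda^{-1}B$ an M-matrix-like kernel: its diagonal entries are $1+\lambda^{-1}b_0^n>1$ and its strictly sub-diagonal entries $\lambda^{-1}b_{n-j}^n$ are nonpositive. Splitting $M_\lambda = D - N$ with $D$ the diagonal part and $N$ strictly lower triangular with nonnegative entries, I expand
\[
R_\lambda = M_\lambda^{(-1)} = \sum_{k\ge 0}(D^{(-1)}\pc N)^k \pc D^{(-1)},
\]
which on each row is a finite sum because $D^{(-1)}\pc N$ is strictly lower triangular. Every summand has nonnegative entries, so $(R_\lambda)_{n-j}^n\ge 0$. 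Taking the diagonal of $M_\lambda\pc R_\lambda = I$ gives $(R_\lambda)_0^n = 1/(1+\lambda^{-1}b_0^n)\in(0,1)$. For the row-sum bound, apply $M_\lambda$ to the constant vector $\mathbf{1}=(1,1,\ldots)^T$: since $(M_\lambda\pc\mathbf{1})_n = 1 + \lambda^{-1}\sum_{j=1}^n b_{n-j}^n\ge 1$, I convolve the identity $\mathbf{1} - R_\lambda\pc\mathbf{1} = R_\lambda\pc(M_\lambda\pc\mathbf{1} - \mathbf{1})$ on the left with the nonnegative kernel $R_\lambda$ to conclude $\mathbf{1} - R_\lambda\pc\mathbf{1}\ge 0$, i.e.\ $\sum_{j=1}^n(R_\lambda)_{n-j}^n\le 1$.

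No single step is difficult. The only subtlety is checking that the Neumann-type expansion is legitimate in the pseudo-convolution algebra, which is immediate because the strictly lower-triangular structure of $D^{(-1)}\pc N$ makes the series terminate on each row; after that, everything reduces to the standard inverse-positivity argument for M-matrices. The result can also be viewed as a discrete counterpart of Lemma \ref{lmm:contcp} in terms of complementary kernels, which serves as a useful sanity check.
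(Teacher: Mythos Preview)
The paper does not actually prove this lemma: it simply cites \cite{fengli2023a} and states the result. So there is no proof in the paper to compare against, and your argument stands on its own.

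Your proof is correct. A few remarks on presentation. For sufficiency, the Neumann-series step is just Lemma~\ref{lmm:signofentry} applied to $M_\lambda=I+\lambda^{-1}B$, so you could cite that lemma instead of re-deriving it. In the row-sum argument your phrasing ``convolve the identity \ldots\ on the left with $R_\lambda$'' is slightly off: the identity $\mathbf{1}-R_\lambda\pc\mathbf{1}=R_\lambda\pc(M_\lambda\pc\mathbf{1}-\mathbf{1})$ already follows from $R_\lambda\pc M_\lambda=I$, and then nonnegativity of $R_\lambda$ and of $M_\lambda\pc\mathbf{1}-\mathbf{1}$ gives the conclusion directly; no further convolution is needed. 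For necessity, your use of Lemma~\ref{lmm:resolvasym} is clean; note that this lemma is also imported from \cite{fengli2023a}, so your argument is in the same circle of ideas as the cited reference, just made explicit here.
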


By Corollary \ref{cor:Rcmmproperties}, $A$ being R-CMM is clearly stronger than being completely positive. In fact, the column monotonicity of $A$ implies that $C_R$ is nonnegative and thus $\sum_{j=1}^n (R_{\lambda})_{n-j}^n\le 1$. However, $C_R$ being nonnegative cannot imply that $A$ is column montonoe. Hence, R-CMM is stronger.  

Similar to the uniform mesh version in Theorem \ref{thm:uniformcmm}, one has
\begin{theorem}\label{thm:Rdoublmon}
The following are equivalent.
\begin{enumerate}[(a)]
\item The array kernel $A$ is R-CMM;

\item The kernel  $A$ is column monotone with positive diagonals, and both the resolvents of $A$ and $L^{(-1)}\pc A\pc L$ defined in \eqref{eq:disreldef} have nonnegative entries for all $\lambda>0$;

\item  The diagonals of $A$ are positive and $I+\lambda A$ is R-CMM for all $\lambda>0$.
 \end{enumerate}
\end{theorem}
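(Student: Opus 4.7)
My plan is to run the equivalences along the cycle (a) $\Rightarrow$ (c) $\Rightarrow$ (b) $\Rightarrow$ (a), using Theorem \ref{thm:qcmchar} as the common translator between R-CMM and sign patterns of inverses, together with Lemma \ref{lmm:Rbasics}, Lemma \ref{lmm:signofentry}, and Lemma \ref{lmm:resolvasym}. Write $\tilde A := L^{(-1)} \pc A \pc L$. A direct calculation gives $L^{(-1)} \pc (I + \lambda A) \pc L = I + \lambda \tilde A$, and Lemma \ref{lmm:Rbasics}(b) gives $(I + \lambda A)^{(-1)} = I - R_\lambda$, so also $(L^{(-1)} \pc (I + \lambda A) \pc L)^{(-1)} = I - \tilde R_\lambda$ where $\tilde R_\lambda$ is the resolvent of $\tilde A$. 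Rearranging $I - R_\lambda = \lambda^{-1} R_\lambda \pc A^{(-1)}$ also produces the key identity $R_\lambda \pc (A^{(-1)} + \lambda I) = \lambda I$, hence
\[
R_\lambda = \lambda (A^{(-1)} + \lambda I)^{(-1)}, \qquad \tilde R_\lambda = \lambda(\tilde A^{(-1)} + \lambda I)^{(-1)}.
\]

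For (a) $\Rightarrow$ (c), assume $A$ is R-CMM and fix $\lambda > 0$. I verify the conditions of Theorem \ref{thm:qcmchar}(c) for $I + \lambda A$. Column monotonicity is inherited from $A$; the added $1$s on the diagonal only strengthen the inequality in the first column. By Theorem \ref{thm:qcmchar}(c) applied to $A$, both $A^{(-1)}$ and $\tilde A^{(-1)}$ have positive diagonals and nonpositive off-diagonals. Then $A^{(-1)} + \lambda I$ still has (strictly larger) positive diagonals and the same nonpositive off-diagonals; by Lemma \ref{lmm:signofentry}, $(A^{(-1)} + \lambda I)^{(-1)}$ has nonnegative entries, so $R_\lambda$ has nonnegative entries, making the off-diagonals of $(I + \lambda A)^{(-1)} = I - R_\lambda$ nonpositive. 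Its diagonals $1 - (R_\lambda)_0^n$ are positive since a direct first-row computation yields $(R_\lambda)_0^n = \lambda a_0^n/(1 + \lambda a_0^n) < 1$. An identical argument with $\tilde A$ in place of $A$ delivers the sign pattern of $(L^{(-1)} \pc (I + \lambda A) \pc L)^{(-1)}$.

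For (c) $\Rightarrow$ (b), Theorem \ref{thm:qcmchar}(c) applied to $I + \lambda A$ gives that $I - R_\lambda$ and $I - \tilde R_\lambda$ have nonpositive off-diagonals, so $R_\lambda$ and $\tilde R_\lambda$ have nonnegative entries for every $\lambda > 0$; column monotonicity and positive diagonals of $A$ fall out of those of $I + \lambda A$ by inspecting off-diagonal entries and subtracting $I$. For (b) $\Rightarrow$ (a), Lemma \ref{lmm:resolvasym} yields the elementwise expansion $R_\lambda = I - \lambda^{-1} A^{(-1)} + O(\lambda^{-2})$ as $\lambda \to \infty$; nonnegativity of the off-diagonals of $R_\lambda$ for all $\lambda > 0$ forces $-b_{n-j}^n \ge 0$ in the limit, i.e., $A^{(-1)}$ has nonpositive off-diagonals, while positive diagonals of $A^{(-1)}$ follow from $a_0^n > 0$ via $b_0^n = 1/a_0^n$. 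The same reasoning applied to $\tilde R_\lambda$ gives the sign conditions on $\tilde A^{(-1)} = (L^{(-1)} \pc A \pc L)^{(-1)}$. Combined with column monotonicity of $A$, Theorem \ref{thm:qcmchar}(c) delivers R-CMM of $A$.

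The main obstacle is step (a) $\Rightarrow$ (c), where nonnegativity of $R_\lambda$ must be extracted from the R-CMM property. The trick is the closed form $R_\lambda = \lambda(A^{(-1)} + \lambda I)^{(-1)}$, which recasts the task as inverting a kernel of M-type (positive diagonals, nonpositive off-diagonals); Lemma \ref{lmm:signofentry} then delivers the signs of the entries for free. The reverse direction is softer, since the asymptotic of Lemma \ref{lmm:resolvasym} automatically transfers sign information from the resolvents to the inverses.
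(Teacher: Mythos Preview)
Your proof is correct and relies on the same ingredients as the paper's (the identity $R_\lambda^{(-1)}=I+\lambda^{-1}A^{(-1)}$, Theorem~\ref{thm:qcmchar}, Lemma~\ref{lmm:signofentry}, and a $\lambda\to\infty$ limit), only running the cycle in the opposite direction (a)~$\Rightarrow$~(c)~$\Rightarrow$~(b)~$\Rightarrow$~(a) rather than (a)~$\Rightarrow$~(b)~$\Rightarrow$~(c)~$\Rightarrow$~(a). The one substantive variation is the limiting step: you invoke Lemma~\ref{lmm:resolvasym} to read off the sign pattern of $A^{(-1)}$ and $\tilde A^{(-1)}$ from the resolvents, whereas the paper passes to the limit directly on the complementary kernel of $\lambda^{-1}I+A$ via elementwise continuity; both routes are equally valid.
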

\begin{proof}
(a) $\Rightarrow$ (b): 
Let $B=A^{(-1)}$. Then $B$ has positive diagonals and nonpositive off-diagonals by the R-CMM property. Note that $R_{\lambda}^{(-1)}= I+\lambda^{-1} A^{(-1)}$ has nonpositive off-diagonals and positive diagonals. Then, the entries of $R_{\lambda}$ are nonnegative. 

Let $M=\lambda L^{(-1)}\pc A\pc L$ whose inverse has nonpositive off-diagonals by the R-CMM property of $A$. 
Writing $(I+M)^{(-1)}=I-N$, then $N$ is the resolvent of $L^{(-1)}\pc A\pc L$. One can similarly find $N^{(-1)}=I+M^{(-1)}$.
Hence, $N$ has nonnegative entries by Lemma \ref{lmm:signofentry}.

(b) $\Rightarrow$ (c):  clearly, $I+\lambda A$ is column monotone. Note that $(I+\lambda A)^{(-1)}=I-R_{\lambda}$ has nonpositive off-diagonals.
Consider $L^{(-1)}\pc(I+\lambda A)\pc L=I+\lambda L^{(-1)}\pc A\pc L$.  The inverse of $L^{(-1)}\pc(I+\lambda A)\pc L$ is thus $I$ minus the resolvent of $L^{(-1)}\pc A\pc L$, and thus has nonpositive off-diagonal entries. Theorem \ref{thm:qcmchar} implies that $I+\lambda A$ is R-CMM.

(c) $\Rightarrow$ (a): the condition implies that $\lambda^{-1}I+A$ is R-CMM for all $\lambda>0$. Taking $\lambda\to \infty$, one can show that the complementary kernel is continuous in $\lambda^{-1}$ elementwise.
The monotonicity and nonnegativity is preserved in the limit.
\end{proof}

We have the following observation.
\begin{corollary}
If $A$ is R-CMM, then it holds that
\begin{gather}\label{eq:rowsums}
\sum_{j=k}^n (R_{\lambda})_{n-j}^n\ge \sum_{j=k}^{n-1}(R_{\lambda})_{n-1-j}^{n-1}, \forall 1\le k\le n-1.
\end{gather}
\end{corollary}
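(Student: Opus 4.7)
The strategy is to recognize that the sum $\sum_{j=k}^n (R_\lambda)_{n-j}^n$ is precisely an entry of the pseudo-convolution $R_\lambda\pc L$, and then to read off the claim as column monotonicity (up to a sign flip) of $R_\lambda\pc L$, which is available once we apply the R-CMM property to the shifted kernel $I+\lambda A$ rather than to $A$ itself.

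First I would verify the identification
\[
(R_\lambda\pc L)_{n-k}^n=\sum_{j=k}^n (R_\lambda)_{n-j}^n L_{j-k}^j=\sum_{j=k}^n (R_\lambda)_{n-j}^n
\]
from the pseudo-convolution formula \eqref{eq:convnonuni} together with the fact that $L$ has entries equal to $1$ throughout its lower triangle. The target inequality \eqref{eq:rowsums} then becomes $(R_\lambda\pc L)_{n-k}^n\ge (R_\lambda\pc L)_{n-1-k}^{n-1}$, i.e., the statement that each entry of $R_\lambda\pc L$ is nondecreasing as we move down the column at fixed column index $k$.

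Next I would invoke Theorem \ref{thm:Rdoublmon}: since $A$ is R-CMM, $I+\lambda A$ is R-CMM for every $\lambda>0$. By Definition \ref{def:quasicm}(a) the right complementary kernel of $I+\lambda A$ is then doubly monotone, and hence in particular column monotone. Using Lemma \ref{lmm:Rbasics}(b), this right complementary kernel is
\[
(I+\lambda A)^{(-1)}\pc L=(I-R_\lambda)\pc L=L-R_\lambda\pc L.
\]
Column monotonicity therefore gives $(L-R_\lambda\pc L)_{j-1}^{n-1}\ge (L-R_\lambda\pc L)_j^n$, and since $L_{j-1}^{n-1}=L_j^n=1$ on the lower triangle this simplifies to $(R_\lambda\pc L)_{j-1}^{n-1}\le (R_\lambda\pc L)_j^n$. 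Setting $j=n-k$ yields \eqref{eq:rowsums}.

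The only subtle point is the bookkeeping between the ``$\cdot_{n-j}^n$'' index convention and the ``down-a-column'' direction encoded in the column monotonicity inequality; once one notices that $R_\lambda\pc L$ stores exactly the partial tail sums of rows of $R_\lambda$, the identification is immediate and no new estimate beyond Theorem \ref{thm:Rdoublmon} is needed. This is why I expect the argument to be essentially a short bookkeeping exercise rather than requiring fresh analysis.
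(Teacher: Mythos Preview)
Your proposal is correct and follows essentially the same route as the paper: invoke Theorem~\ref{thm:Rdoublmon} to obtain that $I+\lambda A$ is R-CMM, identify its right complementary kernel as $(I-R_\lambda)\pc L=L-R_\lambda\pc L$, and read off \eqref{eq:rowsums} from column monotonicity of that kernel. The paper's version is just a terser statement of the same three steps, while you have spelled out the index bookkeeping in full.
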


\begin{proof}
 Since $I+\lambda A$ is also R-CMM, then $(I-R_{\lambda})\pc L$, as the complementary kernel of $I+\lambda A$, is doubly monotone.  Hence, $R_{\lambda}\pc L$ are nondecreasing along the columns and thus \eqref{eq:rowsums} holds.
\end{proof}
\begin{remark}
Note that the right complementary kernel of $R_{\lambda}$ is $\lambda^{-1}C_R+L$ which is doubly monotone. If we can show that $R_{\lambda}$ is column monotone, then $R_{\lambda}$ is R-CMM, which is left for future study.
\end{remark}

The relation \eqref{eq:rowsums}  is an important property we need in section \ref{subsec:monpreserving} to show that the numerical solutions to autonomous equation on nonuniform meshes are monotone. This relation is very natural on uniform meshes and can be derived by complete positivity as $r_{\lambda}$ is nonnegative. For nonuniform meshes, it is difficult to obtain using only complete positivity. The R-CMM property is slightly stronger but seems more suitable as it has built in the fading memory principle.

\section{Monotonicity properties for the numerical solutions}\label{sec:monpre}

\subsection{Monotonicity properties of the R-CMM schemes}\label{subsec:monpreserving}

In this section, we consider  the numerical scheme \eqref{eq:integraldis}.
In other words
\begin{gather}
u=h+ \bar{A}\pc \tau \pc f=h+A \pc f,
\end{gather}
where $\tau$ is given in \eqref{eq:tau}.
We will consider the schemes where $A$ is R-CMM and investigate the two monotonicity properties mentioned in section \ref{subsec:timecontinuous} in the discrete level.

\begin{remark}
A discretization of the fractional ODE may be performed for the differential form \eqref{eq:fracode} and one may obtain
\begin{gather*}
C_R\pc(\nabla_{\tau}u)=(B \pc (u-u_0))_n=f(t_n, u_n), \quad \nabla_{\tau}u:=L^{(-1)}\pc (u-u_0).
\end{gather*}
Then, this can be converted into
\[
u_n=u_0+A\pc f(t_i, u_i)=u_0+\bar{A}\pc \tau \pc f
,\quad A=B^{(-1)}.
\]
For  \eqref{eq:fracode}, the elements in $C_R=(c_{n-j}^n)$ can be regarded as the {\it average}
of $(t_n-s)^{-\alpha}/\Gamma(1-\alpha)$ on $[t_{j-1}, t_j]$ while $a_{n-j}^n$ is the {\it integral} of $(t_n-s)^{\alpha-1}/\Gamma(\alpha)$ on this interval. In fact, the scaling of $c_0^n$ is like $\tau_n^{-\alpha}$
so that the scaling of $a_0^n$ is like $\tau_n^{\alpha}$ which agrees with the scaling for integral.
With this relation in hand, the differential scheme is then said to be R-CMM if $A$ is R-CMM.
\end{remark}

In this section, we prove the monotonicity preserving properties of the numerical solutions with R-CMM kernels. 
\begin{theorem}\label{thm:nummon1}
Suppose $f(t, \cdot)$ is uniformly Lipschitz continuous on $[0, T]$ with Lipschitz constant $M>0$. Assume $A$ is local R-CMM with range $N$. Assume $M\sup_{j\le N} a_0^j=M\sup_{j\le N}\tau_j \bar{a}_0^j<1$.   If $\gamma(t_n):=h^{(1)}(t_n)-h^{(2)}(t_n)$ satisfies $r=\gamma-R_{\lambda}\pc \gamma \ge 0$ where $R_{\lambda}$ is defined in \eqref{eq:disreldef},  then the corresponding numerical solutions satisfy $u^{(1)}_n \ge u^{(2)}_n$ for all $n\le N$. In particular, if $h^{(i)}$'s are two constants and $h^{(1)}\ge h^{(2)}$, then  the claim holds.
\end{theorem}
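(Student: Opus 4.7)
The plan is to mirror the continuous proof of Theorem \ref{thm:contcomparison} at the discrete level, using pseudo-convolution with $I - R_\lambda$. First, set $v_n = u_n^{(1)} - u_n^{(2)}$ and exploit the uniform Lipschitz assumption to write
\[
f(t_n, u_n^{(1)}) - f(t_n, u_n^{(2)}) = g_n v_n, \qquad |g_n| \le M,
\]
where $g_n$ is defined arbitrarily when $v_n = 0$. Subtracting the two instances of the scheme gives the vector identity $v = \gamma + A \pc (g \cdot v)$, with $g \cdot v$ denoting the elementwise product.

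Next, fix any $\lambda \ge M$. From $R_\lambda + \lambda R_\lambda \pc A = \lambda A$ one obtains $(I - R_\lambda) \pc A = \lambda^{-1} R_\lambda$, so pseudo-convolving the identity above with $I - R_\lambda$ yields
\[
v = r + R_\lambda \pc \bigl((1 + \lambda^{-1} g) \cdot v\bigr), \qquad r := \gamma - R_\lambda \pc \gamma \ge 0,
\]
where $r \ge 0$ is the hypothesis (in the constant-signal case, $\gamma$ is a nonnegative constant sequence and $\sum_{j=1}^n (R_\lambda)_{n-j}^n \le 1$ from complete positivity gives $r \ge 0$ automatically). The argument then proceeds by induction on $n$. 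Assuming $v_j \ge 0$ for $j < n$, isolate the $j=n$ term to obtain
\[
\bigl[1 - (R_\lambda)_0^n (1 + \lambda^{-1} g_n)\bigr] v_n = r_n + \sum_{j=1}^{n-1} (R_\lambda)_{n-j}^n (1 + \lambda^{-1} g_j) v_j.
\]
The right-hand side is nonnegative: $r_n \ge 0$ by construction; $(R_\lambda)_{n-j}^n \ge 0$ because $A$ being R-CMM implies complete positivity by Theorem \ref{thm:Rdoublmon}; $1 + \lambda^{-1} g_j \ge 0$ because $\lambda \ge M$; and $v_j \ge 0$ by the induction hypothesis.

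The remaining task, which I expect to be the main obstacle beyond bookkeeping, is to verify that the bracketed coefficient of $v_n$ on the left is strictly positive. The key is to evaluate the identity $(I - R_\lambda) \pc (I + \lambda A) = I$ from Lemma \ref{lmm:Rbasics} on the $n$-th diagonal, which gives $(1 - (R_\lambda)_0^n)(1 + \lambda a_0^n) = 1$ and hence $(R_\lambda)_0^n = \lambda a_0^n/(1 + \lambda a_0^n)$. A short calculation then shows
\[
1 - (R_\lambda)_0^n (1 + \lambda^{-1} g_n) = \frac{1 - a_0^n g_n}{1 + \lambda a_0^n},
\]
which is strictly positive exactly when $a_0^n g_n < 1$; this is guaranteed by $M a_0^n < 1$ together with $|g_n| \le M$. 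The striking feature is that this cancellation eliminates $\lambda$ from the relevant inequality, so the hypothesis $M \sup_{j\le N} a_0^j < 1$ is precisely what makes the induction close, with $\lambda \ge M$ being the only remaining constraint on $\lambda$. Division by the positive bracket delivers $v_n \ge 0$ and completes the induction.
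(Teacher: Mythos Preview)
Your proof is correct and follows the same overall strategy as the paper: subtract the two schemes, pseudo-convolve with $I-R_\lambda$, isolate the diagonal term, and close by induction using nonnegativity of $R_\lambda$. The one notable difference is in how you verify that the bracket $1-(R_\lambda)_0^n(1+\lambda^{-1}g_n)$ is positive. The paper invokes the asymptotic expansion of Lemma~\ref{lmm:resolvasym}, $R_\lambda=I-\lambda^{-1}A^{(-1)}+O(\lambda^{-2})$, and then argues that for $\lambda$ sufficiently large the bracket is positive provided $M\sup_j a_0^j<1$. You instead read off $(R_\lambda)_0^n=\lambda a_0^n/(1+\lambda a_0^n)$ directly from the diagonal of $(I-R_\lambda)\pc(I+\lambda A)=I$ and obtain the exact identity $1-(R_\lambda)_0^n(1+\lambda^{-1}g_n)=(1-a_0^n g_n)/(1+\lambda a_0^n)$. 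This is a clean sharpening: it shows the bracket is positive for \emph{every} $\lambda>0$ (not merely large $\lambda$) as soon as $M a_0^n<1$, so the only residual constraint on $\lambda$ is $\lambda\ge M$ to keep $1+\lambda^{-1}g_j\ge 0$, and it makes transparent why the stepsize condition in the statement is exactly the right one. Your use of the difference quotient for $g_n$ is also slightly more in line with the Lipschitz hypothesis than the paper's integral formula involving $\partial_u f$.
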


\begin{proof}
Clearly, we can consider R-CMM kernels without loss of generality.  Basically, if $w_n=u^{(1)}_n-u^{(2)}_n$, then
\[
w_n=\gamma(t_n)+\sum_{j=1}^{n}a_{n-j}^n g_i w_i, \quad g_i=\int_0^1\partial_u f(t_i, z u_i^{(1)}
+(1-z)u_i^{(2)})\,dz.
\]
This is just  $w=\gamma+A\pc  (gw)$ where $gw=(g_1w_1, g_2w_2,\cdots)$. Taking pseudo-convolution with $I-R_{\lambda}$ on the left, Lemma \ref{lmm:Rbasics} then gives
\begin{gather}\label{eq:diffaux1}
w_n=r_n+R_{\lambda}*(w + gw/\lambda)_n,
\end{gather}
where $r=\gamma-R_{\lambda}\pc \gamma \ge 0$. 
Choosing $\lambda$ large enough, $1+g_n/\lambda>0$ for all $n\le N$. Then,
\begin{gather}\label{eq:diffaux2}
(1-(R_{\lambda})_0^n-(R_{\lambda})_0^n g_n/\lambda)w_n
=r_n+\sum_{j=1}^{n-1} (R_{\lambda})_{n-j}^n w_j(1+g_j/\lambda).
\end{gather}
By Lemma \ref{lmm:resolvasym}, 
\[
1-(R_{\lambda})_0^{n}+(R_{\lambda})_0^{n}\lambda^{-1}g_n
=\lambda^{-1}(a_0^{n})^{-1}+(1-\lambda^{-1}(a_0^{n})^{-1})\lambda^{-1}g_n+O(\lambda^{-2}).
\]
Since $|g_n|\le M$, it follows that the coefficient of $w_n$ in \eqref{eq:diffaux2} is positive if $\lambda$ is large enough by the condition $M\sup_{j\le N} a_0^j<1$. Since $R_{\lambda}\ge 0$ by Theorem \ref{thm:Rdoublmon}, the result then follows by simple induction.

If $h^{(i)}$'s are two constants with $h^{(1)}-h^{(2)}\ge 0$, using Lemma \ref{lmm:resolventsigns}, one can conclude that $r\ge 0$. The result then follows.
\end{proof}

Below, we discuss the monotonicity of the solutions.
Here, we only consider $h(t)\equiv u_0$ and the autonomous cases. 

\begin{theorem}\label{thm:nummon2}
Assume $h(t)\equiv u_0$ and $f(t, u)\equiv f(u)$. Suppose $f$ is Lipschitz continuous with
$|f'(u)|\le M$ for some $M>0$. If $A$ is local R-CMM with range $N$ for the numerical method, then for $M\sup_{j\le N}a_0^j=M\sup_{j\le N}\tau_j \bar{a}_0^j<1$, the solution $\{u_n\}$ is monotone for all $0\le n\le N$.
\end{theorem}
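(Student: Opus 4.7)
The plan is to mimic the continuous proof of Theorem \ref{thm:mon4auto} at the discrete level by deriving a linear equation for the ``discrete derivative'' $v_n:=u_n-u_{n-1}$. If one can cast this equation in the form $c_n v_n=(\text{nonnegative combination of }v_j,\ j<n)+\lambda^{-1}f(u_0)\,q_n$ with $c_n>0$ and $q_n\ge 0$, a straightforward induction forces every $v_n$ to inherit the sign of $f(u_0)$, and the monotonicity of $\{u_n\}$ follows at once.

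Subtracting $u_0\mathbf{1}$ in $u=u_0\mathbf{1}+A\pc f(u)$ and noting that $u-u_0\mathbf{1}=L\pc v$, I would pseudo-convolve on the left with $I-R_\lambda$ and use $(I-R_\lambda)\pc A=\lambda^{-1}R_\lambda$ from Lemma \ref{lmm:Rbasics}, obtaining
\begin{equation*}
L\pc v-R_\lambda\pc L\pc v=\lambda^{-1}R_\lambda\pc f(u).
\end{equation*}
Since $f(u_n)-f(u_{n-1})=g_n v_n$ with $g_n=\int_0^1 f'(u_{n-1}+sv_n)\,ds$ and $|g_n|\le M$, one has the telescoping decomposition $f(u)=f(u_0)\mathbf{1}+L\pc(gv)$. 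Applying $L^{(-1)}$ on the left and setting $\tilde{R}_\lambda:=L^{(-1)}\pc R_\lambda\pc L$, $q:=L^{(-1)}\pc R_\lambda\pc\mathbf{1}$, this collapses to
\begin{equation*}
v-\tilde{R}_\lambda\pc v=\lambda^{-1}f(u_0)\,q+\lambda^{-1}\tilde{R}_\lambda\pc(gv).
\end{equation*}

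The R-CMM hypothesis then enters in three places. First, a short calculation shows $\tilde{R}_\lambda$ is exactly the resolvent, in the sense of \eqref{eq:disreldef}, of the kernel $L^{(-1)}\pc A\pc L$, so Theorem \ref{thm:Rdoublmon}(b) makes all its entries nonnegative. Second, $(R_\lambda\pc\mathbf{1})_n=\sum_{j=1}^n(R_\lambda)_{n-j}^n$, and the row-monotonicity inequality \eqref{eq:rowsums} from Corollary \ref{cor:Rcmmproperties} gives $q_n\ge 0$. Third, because $(L^{(-1)}\pc A\pc L)_0^n=a_0^n$, the resolvent equation on the diagonal yields $(\tilde{R}_\lambda)_0^n=\lambda a_0^n/(1+\lambda a_0^n)$, whence the coefficient of $v_n$ on the left-hand side becomes
\begin{equation*}
c_n:=1-(\tilde{R}_\lambda)_0^n(1+\lambda^{-1}g_n)=\frac{1-a_0^n g_n}{1+\lambda a_0^n}>0,
\end{equation*}
strict positivity following from $a_0^n|g_n|\le Ma_0^n<1$; taking $\lambda>M$ additionally guarantees $1+\lambda^{-1}g_j>0$ for every $j$.

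The induction is then immediate: at $n=1$ one reads off $c_1 v_1=\lambda^{-1}f(u_0)q_1$ with $q_1=(R_\lambda)_0^1>0$, pinning down $\mathrm{sgn}\,v_1=\mathrm{sgn}\,f(u_0)$; and assuming $v_1,\dots,v_{n-1}$ share that sign, all remaining terms on the right-hand side at index $n$ do too, so $c_n>0$ forces the same sign on $v_n$. (The degenerate case $f(u_0)=0$ is handled by uniqueness of the implicit scheme, which forces $u_n\equiv u_0$.) I expect the main hurdle to be precisely the middle paragraph: nonnegativity of the sandwiched resolvent $\tilde{R}_\lambda=L^{(-1)}\pc R_\lambda\pc L$ and of $q$ is not something that mere complete positivity of $A$ supplies, and it is exactly the R-CMM property (in particular the control \eqref{eq:rccmon}) that was tailored to deliver both.
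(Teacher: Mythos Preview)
Your proof is correct, and indeed slightly cleaner than the paper's. Both arguments ultimately rest on the same inequality---the nonnegativity of the quantities $\sum_{j=k}^n(R_\lambda)_{n-j}^n-\sum_{j=k}^{n-1}(R_\lambda)_{n-1-j}^{n-1}$---but you package it differently. The paper takes the difference $u_{n+1}-u_n$ directly at the level of the identity $u-u_0=R_\lambda\pc(u-u_0+\lambda^{-1}f(u))$, then rearranges by hand into sums weighted by $S_j^n:=\sum_{\ell=j}^n(R_\lambda)_{n-\ell}^{n+1}-\sum_{\ell=j+1}^n(R_\lambda)_{n-\ell}^n$, invoking \eqref{eq:rowsums} for $S_j^n\ge 0$. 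You instead conjugate: writing $u-u_0=L\pc v$ and applying $L^{(-1)}$ on the left produces the single kernel $\tilde{R}_\lambda=L^{(-1)}\pc R_\lambda\pc L$, which you correctly identify as the resolvent of $L^{(-1)}\pc A\pc L$, so its nonnegativity comes straight from Theorem~\ref{thm:Rdoublmon}(b). Your $(\tilde{R}_\lambda)_{n-j}^n$ are exactly the paper's $S_j^{\,n-1}$ (and $q_n=(\tilde{R}_\lambda)_{n-1}^n$, so its nonnegativity is already contained in $\tilde{R}_\lambda\ge 0$; the separate appeal to \eqref{eq:rowsums} is redundant). A second gain of your route is the closed form $c_n=(1-a_0^n g_n)/(1+\lambda a_0^n)$, which is visibly positive for any $\lambda>0$ under the hypothesis $Ma_0^n<1$; the paper instead appeals to the asymptotic expansion of Lemma~\ref{lmm:resolvasym} and must take $\lambda$ large (potentially $n$-dependent). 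Two minor points: the inequality \eqref{eq:rowsums} lives in the Corollary following Theorem~\ref{thm:Rdoublmon}, not in Corollary~\ref{cor:Rcmmproperties}; and your closing remark cites \eqref{eq:rccmon}, which concerns $A$ rather than $R_\lambda$---the relevant control is \eqref{eq:rowsums}.
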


\begin{proof}
If $f(u_0)=0$, one can see that the solution is always $u_n\equiv u_0$ (note that the numerical solution is uniquely solvable).

Below, we only focus on $f(u_0)>0$, as the discussion for $f(u_0)<0$ is similar. Again, we consider the R-CMM sequences without loss of generality (for the local R-CMM ones, one only needs to repeat the argument for $n\le N$).

Consider first $n=1$. Then,
\[
u_1-a_0^1 f(u_1)=u_1-\tau_1 \bar{a}_0^1 f(u_1)=u_0.
\]
Let $\mu(u)=u-\tau_1 \bar{a}_0^1 f(u)$. Clearly, $\mu(u_0)<u_0=\mu(u_1)$ and the function is increasing. Hence, one has
$u_1>u_0$.
Now, define
\[
v_n:=u_{n+1}-u_n, \quad n\ge 0.
\]
Hence, $v_0>0$ and we aim to show $v_n\ge 0$ for $n\ge 1$.

Recall $A$ is R-CMM, and $R_{\lambda}$ is its resolvent. Let $u-u_0:=(u_1-u_0, u_2-u_0, \cdots)$ and $f(u)=(f(u_1), f(u_2), \cdots)$. Then, $u-u_0=A\pc f(u)$. Taking pseudo-convolution with $I-R_{\lambda}$ on the left, one has
\[
u_n-u_0=R_{\lambda}\pc (u-u_0+f(u)/\lambda)_n,
\]
which gives
\begin{multline*}
v_n=(R_{\lambda})_{n}^{n+1}[u_1-u_0+f(u_1)/\lambda]
+\sum_{j=1}^n(R_{\lambda})_{n-j}^{n+1}[u_{j+1}-u_0+\frac{f(u_{j+1})}{\lambda}]\\
-\sum_{j=1}^n(R_{\lambda})_{n-j}^n[u_j-u_0+\frac{f(u_j)}{\lambda}]=: I_1+I_2+I_3.
\end{multline*}

Mimicking the continuous case, one would arrange it into the following
\begin{gather}\label{eq:I2plusI3}
I_2+I_3=\sum_{j=1}^n[(R_{\lambda})_{n-j}^{n+1}-(R_{\lambda})_{n-j}^n]\left[u_{j+1}-u_0+\frac{f(u_{j+1})}{\lambda}\right]
+\sum_{j=1}^n (R_{\lambda})_{n-j}^n(v_j+\lambda^{-1}g_j v_j),
\end{gather}
where $g_j=\int_0^1 f'(z u_{j+1}+(1-z)u_j)\,dz$.
The issue is that the sign of the first term is not easy to determine. Hence, we must rearrange the terms to resolve this. We rewrite
\[
u_{j+1}-u_0=u_1-u_0+\sum_{\ell=1}^j v_{\ell}, \quad f(u_{j+1})=f(u_1)+\sum_{\ell=1}^j g_{\ell}v_{\ell}.
\]
Introduce the notation
\[
S_j^n:=\sum_{\ell=j}^n (R_{\lambda})_{n-\ell}^{n+1}-\sum_{\ell=j+1}^{n}
(R_{\lambda})_{n-\ell}^n.
\]
By relation \eqref{eq:rowsums},  $S_j^n\ge 0$. Using $S_j^n$, one then has
\begin{gather*}
I_1+I_2+I_3=S_0^n(u_1-u_0+f(u_1)/\lambda)
+\sum_{j=1}^{n-1} v_j(1+g_j/\lambda) S_j^n
+v_n(1+g_n/\lambda) (R_{\lambda})_0^{n+1},
\end{gather*}
so that
\begin{gather}
v_n(1-(R_{\lambda})_0^{n+1}-(R_{\lambda})_0^{n+1}\lambda^{-1}g_n)
=\sum_{j=1}^{n-1} v_j(1+g_j/\lambda) S_j^n+\gamma_n,
\end{gather}
where
\[
\gamma_n=S_0^n(u_1-u_0+f(u_1)/\lambda).
\]
Hence, for $\lambda$ sufficiently large $u_1-u_0+f(u_1)/\lambda>0$ and thus $\gamma_n\ge 0$.

Now, consider the coefficient of $v_n$. By Lemma \ref{lmm:resolvasym},
\[
\epsilon_n:=1-(R_{\lambda})_0^{n+1}+(R_{\lambda})_0^{n+1}\lambda^{-1}g_n
=\lambda^{-1}(a_0^{n+1})^{-1}+(1-\lambda^{-1}(a_0^{n+1})^{-1})\lambda^{-1}g_n+O(\lambda^{-2}).
\]
Since $|g_n|\le M$ and $a_0^{n+1}=\bar{a}_0^{n+1}\tau_{n+1}$, if
$M a_0^{n+1}<1$,
then $\epsilon_n>0$ for $\lambda$ large enough.

For each $n$, one may choose a suitably large $\lambda$ (depending on $n$) so that the coefficients are positive to find $v_n\ge 0$, by induction.
\end{proof}

\begin{remark}\label{rmk:cmtuniform}
For uniform meshes, the first term in \eqref{eq:I2plusI3} would vanish and the monotonicity of the numerical solution follows easily by the nonnegativity of $R_{\lambda}$. Hence the complete positivity is enough for this monotonicity property in the case of uniform meshes. 
\end{remark}

\begin{remark}
For time fractional ODEs, $\bar{a}_0^n\sim \frac{1}{\Gamma(\alpha+1)}\tau_n^{\alpha-1}$.
Then, the condition on the stepsize in Theorem \ref{thm:nummon2} agrees with the one in \cite{li2021complete}.
\end{remark}

\subsection{Discussion}

We perform some discussion on the  time fractional differential equations here as they are an important class of theVolterra integral equations we considered. The kernels for time fractional ODEs are clearly CM, which is thus log-convex and CMM.  

 The so-called CM-preserving schemes on uniform meshes were proposed in \cite{li2021complete}.
 These schemes are on the uniform meshes and require the discrete kernel $\{a_j\}$ to be a CM sequence.  It has been shown that the CM-preserving methods have many good stability properties and have been used sucessfully to prove some sharp estimates \cite{chen2022using}. The standard $L1$ scheme, the Gr\"unwald-Letnikov method, and the method with averaged kernel are CM-preserving for the time fractional ODEs \cite{li2021complete}.   The CM-preserving schemes are clearly CMM by Remark \ref{rmk:CMseq}. However,  such methods may be restricted in applications, as they are restricted to uniform meshes and high order schemes often break the CM property.

Another option is to consider discretization that are log-convex, which is related to the positive definiteness of the methods and may enjoy some good properties \cite{liao2020positive}.
As indicated in Lemma \ref{lmm:cmmuni},  the monotonicity, nonnegativity and the log-convexity $a_{j+1}a_{j-1}\ge a_j^2$ will suffice for the CMM property on uniform meshes. This means that the CMM property is in fact kind of weak for uniform meshes.

As for the nonuniform meshes, the result in Proposition \ref{pro:fracqcm} for the time fractional ODE is interesting in the sense that there is no restriction on the ratios $\tau_{j+1}/\tau_j$ for the stepsizes. This may indicate that the R-CMM property is quite flexible in practice.

\section{Application to fractional ODEs}\label{sec:ex}

In this subsection we consider the integral equation \eqref{eq:fracint} for the fractional ODEs with $\alpha\in (0,1)$.
We consider the simplest scheme for the integral on the nonuniform meshes. In particular, we consider the method
\begin{gather}\label{eq:fracdis1}
u_n=u_0+\sum_{j=1}^n a_{n-j}^n f(t_j, u_j),
\end{gather}
with the coefficients given by
\begin{gather}\label{eq:fraccoe}
a_{n-j}^n=\frac{1}{\Gamma(\alpha)}\int_{t_{j-1}}^{t_j}(t_n-s)^{\alpha-1}\,ds.
\end{gather}
That means we approximate $f(s, u(\cdot))$ using the constant interpolation with the right point
on each time interval. Such an integral averaged method on uniform meshes has been applied to investigate the time continuous fractional gradient flows and the fractional SDEs in \cite{li2019discretization}.

\subsection{The R-CMM property of the averaged integral scheme}

\begin{proposition}\label{pro:fracqcm}
The kernel $A$ in the scheme \eqref{eq:fracdis1}-\eqref{eq:fraccoe} for the fractional ODE is R-CMM. Consequently, the results in Theorem \ref{thm:nummon1} and Theorem \ref{thm:nummon2} hold for this method.
\end{proposition}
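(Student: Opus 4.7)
The plan is to invoke Proposition \ref{pro:suffcond}, whose hypotheses are column monotonicity of $A$, together with positivity and the log-convexity condition \eqref{eq:logconv} for both $A$ and $M:=L^{(-1)}\pc A\pc L$. Positivity of $a_{n-j}^n$ and the column monotonicity $a_{j-1}^{n-1}\ge a_j^n$ are immediate, because on the fixed interval $I_{n-j}$ the integrand $(t-s)^{\alpha-1}$ in \eqref{eq:fraccoe} decreases in $t$ for $\alpha\in(0,1)$. Explicitly evaluating \eqref{eq:fraccoe} gives $a_{n-j}^n=\tfrac{1}{\Gamma(\alpha+1)}[(t_n-t_{j-1})^{\alpha}-(t_n-t_j)^{\alpha}]$, so each factor appearing in \eqref{eq:logconv} is itself a single integral of $\alpha s^{\alpha-1}$ over an interval of length $\tau_p$ or $\tau_{p+1}$.

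For the log-convexity of $A$, I would rewrite each of the two products in \eqref{eq:logconv} as a double integral of $\alpha^2 r^{\alpha-1}s^{\alpha-1}$ over a rectangle of dimensions $\tau_{p+1}\times\tau_p$. A common shift parameter $(r',s')\in[0,\tau_{p+1}]\times[0,\tau_p]$ then exhibits the problem as a pointwise comparison of $r^{\alpha-1}s^{\alpha-1}$ at corresponding points, reducing matters to an inequality of the form $Q_1Q_2\ge P_1P_2$ between the shifted arguments. A short algebraic expansion shows $Q_1Q_2-P_1P_2=\tau_n(\tau_{p+1}+s'-r')\ge 0$, using only $r'\le\tau_{p+1}$ and $s'\ge 0$; since $\alpha-1<0$, this reverses to give the required pointwise integrand inequality, and integrating closes the argument.

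The main obstacle is handling $M$, whose entries are not transparent from the definition. The key observation is the telescoping identity
\[
\sum_{j=i}^{n}a_{n-j}^n=\frac{1}{\Gamma(\alpha)}\int_{t_{i-1}}^{t_n}(t_n-s)^{\alpha-1}\,ds=\frac{(t_n-t_{i-1})^{\alpha}}{\Gamma(\alpha+1)},
\]
from which unrolling the two pseudo-convolutions in $M=L^{(-1)}\pc(A\pc L)$ yields the closed form
\[
\tilde a_{n-j}^n=\frac{1}{\Gamma(\alpha+1)}\bigl[(t_n-t_{j-1})^{\alpha}-(t_{n-1}-t_{j-1})^{\alpha}\bigr]=\frac{1}{\Gamma(\alpha)}\int_{t_{n-1}}^{t_n}(s-t_{j-1})^{\alpha-1}\,ds.
\]
This displays $M$ as a structural dual of $A$: where $A$ integrates $s\mapsto(t_n-s)^{\alpha-1}$ across the column-interval $I_j$, $M$ integrates $s\mapsto(s-t_{j-1})^{\alpha-1}$ across the row-interval $(t_{n-1},t_n)$. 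Positivity of $\tilde a_{n-j}^n$ is then immediate since $x\mapsto x^{\alpha}$ is increasing. The log-convexity of $M$ follows by exactly the same double-integral strategy as for $A$: the four factors become single integrals of $\alpha s^{\alpha-1}$ over intervals of length $\tau_n$ or $\tau_{n-1}$, the two products become double integrals over rectangles of matching dimensions $\tau_{n-1}\times\tau_n$, and a parallel expansion reduces everything to $Q_1Q_2-P_1P_2=\tau_p(\tau_{n-1}+v'-u')\ge 0$ for the corresponding shift parameters $(u',v')\in[0,\tau_{n-1}]\times[0,\tau_n]$. Proposition \ref{pro:suffcond} then yields the R-CMM property of $A$, and Theorems \ref{thm:nummon1}--\ref{thm:nummon2} apply verbatim under the stated Lipschitz and step-size hypotheses.
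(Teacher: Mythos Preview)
Your proposal is correct and follows the paper's own strategy: invoke Proposition~\ref{pro:suffcond}, verify the column monotonicity of $A$ directly, compute the entries of $M=L^{(-1)}\pc A\pc L$ explicitly via the telescoping identity (the paper obtains the same formula $\beta_{n-j}^n=(t_n-t_{j-1})^{\alpha}-(t_{n-1}-t_{j-1})^{\alpha}$), and then check the log-convexity condition \eqref{eq:logconv} for both $A$ and $M$.

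The only difference lies in how the log-convexity is verified. The paper studies the ratios $r_j^n=a_{n-j}^n/a_{n-1-j}^{n-1}$ and $\bar r_j^n=\beta_{n-j}^n/\beta_{n-(j-1)}^n$, introduces auxiliary two-variable functions $\theta(x,y)$ and $\bar\theta(x,y)$, and establishes monotonicity by showing both partial derivatives are negative through a further one-variable reduction. Your argument instead writes each of the four factors in \eqref{eq:logconv} as a single integral of $\alpha s^{\alpha-1}$, pairs them into double integrals over a common rectangle, and reduces the comparison to a purely algebraic pointwise inequality of the form $CD-AB=\tau_n(\tau_p+r'-s')\ge 0$ (respectively $\tau_{j-1}(\tau_{n-1}+v'-u')\ge 0$), which holds simply by the ranges of the shift variables. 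This is a genuinely cleaner and more elementary device---no differentiation is needed, and the symmetry between the $A$ case and the $M$ case becomes transparent once you observe that $M$ integrates the same power over the row interval $(t_{n-1},t_n)$ rather than the column interval $I_j$. Both routes, however, land at the same place and yield no additional information beyond what Proposition~\ref{pro:suffcond} requires.
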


\begin{proof}
We basically apply Proposition \ref{pro:suffcond} to show that $A$ is R-CMM. Here, we verify the conditions. Let $A=(a_{n-j}^n)$. It is clear that
\[
a_{n-j}^n=\frac{1}{\Gamma(\alpha+1)}((t_n-t_{j-1})^{\alpha}
-(t_n-t_j)^{\alpha}).
\]
Clearly, $A$ is strictly column monotone (strictly decreasing along the columns).

{\bf Step 1}  We verify the log-convexity condition \eqref{eq:logconv}. Fixing $n$, consider the ratio
\[
r_j^n=\frac{a_{n-j}^n}{a_{n-1-j}^{n-1}}=\frac{(t_n-t_{j-1})^{\alpha}
-(t_n-t_j)^{\alpha}}{(t_{n-1}-t_{j-1})^{\alpha}
-(t_{n-1}-t_{j})^{\alpha}}.
\]
We now verify that $r_j^n$ is decreasing for $j$.
To do this, we consider the function
\[
\theta(x, y)=\frac{(t_n-t_{j-1}-x)^{\alpha}
-(t_n-t_j-y)^{\alpha}}{(t_{n-1}-t_{j-1}-x)^{\alpha}
-(t_{n-1}-t_{j}-y)^{\alpha}}, \quad 0\le x\le \tau_j, ~0\le y\le \tau_{j+1}.
\]
Clearly,  $0\le \theta \le 1$.
Moreover, 
\[
\frac{\partial\theta}{\partial x}=[(t_{n-1}-t_{j-1}-x)^{\alpha}
-(t_{n-1}-t_{j}-y)^{\alpha}]^{-2}\alpha A_x,
\]
where
\begin{multline*}
A_x=(t_{n-1}-t_{j-1}-x)^{\alpha-1}[(t_n-t_{j-1}-x)^{\alpha}
-(t_n-t_j-y)^{\alpha}]\\
-(t_n-t_{j-1}-x)^{\alpha-1}[(t_{n-1}-t_{j-1}-x)^{\alpha}
-(t_{n-1}-t_{j}-y)^{\alpha}].
\end{multline*}
To show $A_x\le 0$, consider the mapping
\begin{multline*}
z\mapsto A_x(z):=
(t_{n-1}-t_{j-1}-x)^{\alpha-1}[(t_n-t_{j-1}-x)^{\alpha}
-(t_n-t_{j-1}-x-z)^{\alpha}] -\\
(t_n-t_{j-1}-x)^{\alpha-1}[(t_{n-1}-t_{j-1}-x)^{\alpha}
-(t_{n-1}-t_{j-1}-x-z)^{\alpha}], 0\le z\le \tau_j+y-x.
\end{multline*}
One can find that $A_x(0)=0$ and 
\[
A_x'(z)=\alpha (t_{n-1}-t_{j-1}-x)^{\alpha-1} (t_n-t_{j-1}-x-z)^{\alpha-1}
-\alpha (t_n-t_{j-1}-x)^{\alpha-1}(t_{n-1}-t_{j-1}-x-z)^{\alpha-1}.
\]
Since $(t_n-t_{j-1}-x)(t_{n-1}-t_{j-1}-x-z)>
(t_{n-1}-t_{j-1}-x) (t_n-t_{j-1}-x-z)$ for $z>0$, one has $A_x'(z)<0$.  Hence, $A_x=A_x(\tau_j+y-x)<0$.

For the derivative on $y$, the calculation is similar. In fact
\[
\frac{\partial\theta}{\partial y}=[(t_{n-1}-t_{j-1}-x)^{\alpha}
-(t_{n-1}-t_{j}-y)^{\alpha}]^{-2}\alpha A_y,
\]
with 
\begin{multline*}
A_y=(t_n-t_{j}-y)^{\alpha-1}[(t_{n-1}-t_{j-1}-x)^{\alpha}
-(t_{n-1}-t_{j}-y)^{\alpha}] \\
-(t_{n-1}-t_{j}-y)^{\alpha-1}[(t_n-t_{j-1}-x)^{\alpha}
-(t_n-t_j-y)^{\alpha}].
\end{multline*}
Here, we consider 
\begin{multline*}
z\mapsto A_y(z):=(t_n-t_{j}-y)^{\alpha-1}[(t_{n-1}-t_{j}-y+z)^{\alpha}
-(t_{n-1}-t_{j}-y)^{\alpha}] \\
-(t_{n-1}-t_{j}-y)^{\alpha-1}[(t_n-t_{j}-y+z)^{\alpha}-(t_n-t_j-y)^{\alpha}],
 \quad 0\le z\le y+\tau_j-x.
\end{multline*}
Using similar trick, one can show that $A_y(0)=0$ and $A_y'(z)< 0$ for $z>0$.
Hence, $A_y=A_y(y+\tau_j-x)<0$. Hence, $\theta$ is decreasing on the region considered. 
This then verifies that $r_j^n> r_{j+1}^n$, and thus \eqref{eq:logconv}.

{\bf Step 2}  We consider $\Gamma(1+\alpha)L^{(-1)}\pc A\pc L=(\beta_{n-j}^n)$. Then, we show the log-convexity condition \eqref{eq:logconv} for this kernel.
It is not hard to determine that
\[
\beta_{n-j}^n=(t_n-t_{j-1})^{\alpha}-(t_{n-1}-t_{j-1})^{\alpha}.
\]
It is straightforward to see that every element is positive. Note that this is row monotone, and may not be column monotone.

To verify \eqref{eq:logconv}, the ratio considered would be different
\[
\bar{r}_j^n=\frac{\beta_{n-j}^n}{\beta_{n-(j-1)}^n} =\frac{(t_n-t_{j-1})^{\alpha}-(t_{n-1}-t_{j-1})^{\alpha}}{(t_{n}-t_{j-2})^{\alpha}-(t_{n-1}-t_{j-2})^{\alpha}}.
\]
We will show that this is decreasing in $n$. The calculation is similar. Define
\[
\bar{\theta}(x, y) =\frac{(t_n-t_{j-1}+x)^{\alpha}-(t_{n-1}-t_{j-1}+y)^{\alpha}}{(t_{n}-t_{j-2}+x)^{\alpha}-(t_{n-1}-t_{j-2}+y)^{\alpha}}, \quad 0\le x\le \tau_{n+1}, 0\le y\le \tau_n.
\]
It can be shown similarly that
\[
\frac{\partial\bar{\theta}}{\partial x}<0, \quad \frac{\partial\bar{\theta}}{\partial y}<0.
\]
 This then implies that $\bar{\theta}$ is decreasing on the region considered.  Hence, we find
\[
\bar{r}_j^n> \bar{r}_j^{n+1}.
\]
This is equivalent to \eqref{eq:logconv}.

Hence,  $A$ and $L^{(-1)}\pc A \pc L$ verify the conditions in Proposition \ref{pro:suffcond}, so $A$ is R-CMM.
\end{proof}

\subsection{Numerical illustration}

In this subsection, we perform some numerical tests for the scheme mentioned in section \ref{sec:ex}. In particular, we will take
\begin{gather}
D_c^{\alpha}u=\sin(1+u^2),\quad u(0+)=u_0 \Longleftrightarrow u(t)=u_0+\frac{1}{\Gamma(\alpha)}\int_0^t
(t-s)^{\alpha-1}f(u(s))\,ds
\end{gather}
as the example, where $\alpha=0.6$.
We use three types of meshes:
\begin{itemize}
\item Increasing mesh with $\tau_1=0.01$ and $\tau_{j+1}/\tau_j=1.2$
for $j\ge 1$;
\item Decreasing mesh with $\tau_1=0.1$ and $\tau_j=0.1(1+0.5j)^{-1/2}$;
\item Random mesh with $\tau_j\sim 0.1\cdot U(0, 1)$, where $U(0, 1)$ indicates the uniform distribution on $(0, 1)$.
\end{itemize}

\begin{figure}[!htbp]
\centering  
\includegraphics[width=0.95\textwidth]{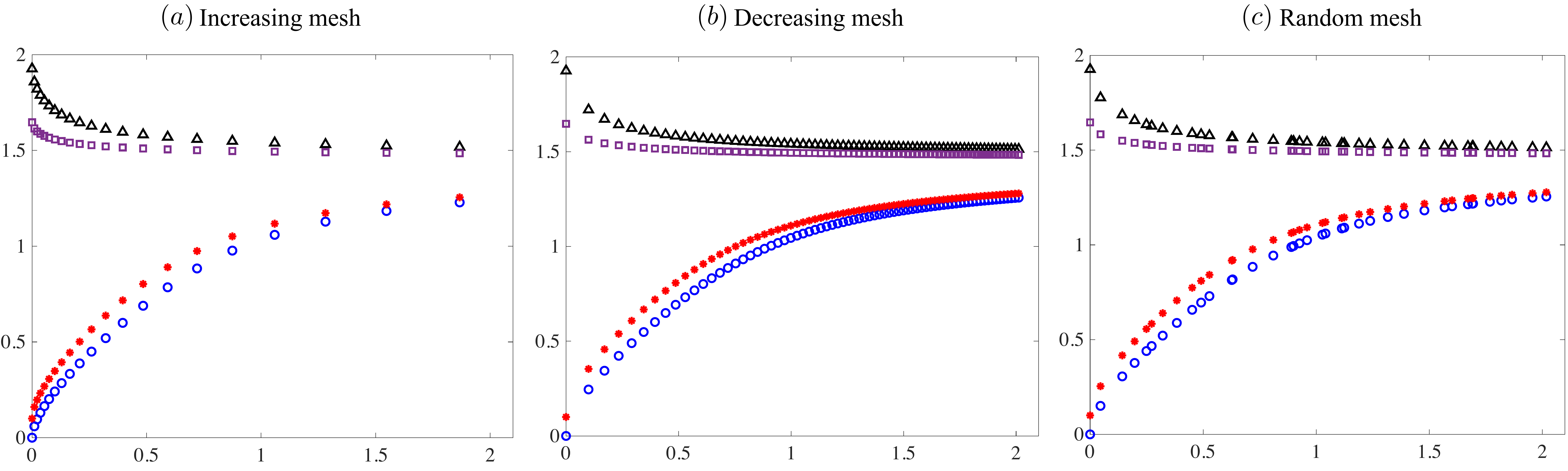}  
\caption{The numerical solutions with different meshes and different initial values. (a) the increasing mesh; (b) the decreasing mesh; (c) the random mesh. We find that the orders of the solution curves indeed stay monotone, and each curve for different meshes is also monotone.} 
 \label{fig:example1} 
\end{figure}

The numerical results with initial values $u_0=0, 0.1, \sqrt{3\pi/2-1}, \sqrt{3\pi/2-2}$ are shown in Fig. \ref{fig:example1} .
We find that the two monotonicity properties are preserved for these numerical solutions over each nonuniform mesh. 
(In the figure, the monotonicty for the $u_0=\sqrt{3\pi/2-2}$ curve is not very clear. With the concrete numerical values, we have checked that they are indeed monotone.) This then verifies the theory.

\section*{Acknowledgement}

This work was financially supported by the National Key R\&D Program of China, Project Number 2021YFA1002800 and 2020YFA0712000. The work of Y. Feng was partially sponsored by NSFC 12301283, Shanghai Sailing program 23YF1410300 and Science and Technology Commission of Shanghai Municipality (No. 22DZ2229014). The work of L. Li was partially supported by NSFC 12371400 and 12031013,  Shanghai Science and Technology Commission (Grant No. 21JC1403700, 20JC144100), the Strategic Priority Research Program of Chinese Academy of Sciences, Grant No. XDA25010403.  L. Li would like to thank Jiwei Zhang for his comment on CM-preserving schemes which brought our attention to nonuniform meshes.

\appendix

\section{The proofs for the monotonicity properties}\label{app:monoproof}

We first present the proof for Theorem \ref{thm:contcomparison}.
\begin{proof}[Proof of Theorem \ref{thm:contcomparison}]
Let $v(t):=u_1(t)-u_2(t)$ and $[0, T_b)$ be the common interval of existence. Then, 
\[
v(t)=\gamma(t)+\int_0^t a(t-s)g(s) v(s)\,ds, \quad 
g(s):=\int_0^1 \partial_u f(s, z u_1(s)+(1-z) u_2(s))\,dz.
\]
Convolving both sides with $\delta-r_{\lambda}$, one has
\[
v(t)-r_{\lambda}*v(t)=(\delta-r_{\lambda})*\gamma(t)+\lambda^{-1}r_{\lambda}*(gv).
\]
This implies that
\begin{gather}\label{eq:diffresol}
v(t)=\beta_{\lambda}(t)+\int_0^t r_{\lambda}(t-s)[1+\lambda^{-1}g(s)]v(s)\,ds.
\end{gather}
For any $t\in [0, T_b)$, $u_1, u_2$ are bounded on $[0, t]$ and hence one can choose $\lambda$ large enough to make $1+g(s)/\lambda>0$ on $[0, t]$.  Recall $\beta_{\lambda}(t)\ge 0$. Since $\beta_{\lambda}\equiv 0$ is trivial, we assume that $\beta_{\lambda}>0$ somewhere. 

If $\beta_{\lambda}(0)>0$ (or equivalently $\gamma(0)>0$), then $v(0)=\beta_{\lambda}(0)>0$ so that $v(s)> 0$ for $s$ small enough. If $v$ ever reaches $0$ at a first time $t_*$, then \eqref{eq:diffresol} tells us that
\[
0=v(t_*)=\beta_{\lambda}(t_*)+\int_0^{t_*} r_{\lambda}(t_*-s)[1+\lambda^{-1}g(s)]v(s)\,ds>0.
\] 
This is a contradiction, so $v(t)>0$ for all $t\in [0, T_b)$. 

Now, consider the degenerate case, namely $\beta_{\lambda}(t)$ is zero on $[0, t_1]$ and  $\beta_{\lambda}(t)>0$ on $(t_1, t_1+\delta)$ for some $\delta>0$. 
Then, by \eqref{eq:interres}, $\gamma=\beta_{\lambda}+\lambda a*\beta_{\lambda}$ is also zero on $[0, t_1]$. By the uniqueness of the solution to \eqref{eq:vol}, $u_1(t)=u_2(t)$ or $v(t)=0$ on $[0, t_1]$.
Assume for the purpose of contradiction that $v(s)<0$
on $(t_1, t_1+\delta_1)$ for some $\delta_1\le \delta$.
Fix $\lambda$ such that $1+h(s)/\lambda>0$ on $(t_1, t_1+\delta_1)$.
Let $A:=\sup_{s\in (t_1, t_1+\delta_1)}(1+h(s)/\lambda)>0$.
We take $\e\in (0, \delta_1]$ such that $\int_{0}^{\e}r_{\lambda}(s)\,ds<1/(2A)$. Let $t_2=\mathrm{argmin}_{s\in [t_1, t_1+\e]}v(s)\in (t_1, t_1+\e]$. Then,
\begin{multline*}
v(t_2)=\beta_{\lambda}(t_2)+\int_{t_1}^{t_2}r_{\lambda}(t_2-s)(1+h(s)/\lambda)v(s)\,ds \\
\ge \beta_{\lambda}(t_2)+Av(t_2)\int_{t_1}^{t_2}r_{\lambda}(t_2-s)\,ds \ge \beta_{\lambda}(t_2)+v(t_2)/2.
\end{multline*}
This is a contradiction. Therefore, $v(t)\ge 0$ for $t\in (t_1, t_1+\delta_1)$ for some $\delta_1\le \delta$, which can be strengthened to $v(t)>0$ for $t\in (t_1, t_1+\delta_1)$ by \eqref{eq:diffresol}. Then, \eqref{eq:diffresol} again implies that $v(t)>0$ for $t\in (t_1, T_b)$.
\end{proof}

Below we present the proof for Theorem \ref{thm:mon4auto}. 

\begin{proof}[Proof of Theorem \ref{thm:mon4auto} ]
Convolving both sides of the equation with $\delta-r_{\lambda}$, one has
\[
u(t)=h(t)+(u-h)*r_{\lambda}+\lambda^{-1}r_{\lambda}*f(\cdot, u(\cdot)).
\]
Taking derivative on both sides which is feasible since $u$ is absolutely continuous, one has
\begin{multline}\label{eq:uprime}
u'(t)=h'(t)+r_{\lambda}(t)(u(0)-h(0))+r_{\lambda}*(u'-h')\\
+\lambda^{-1}r_{\lambda}(t)f(0, u(0))
+\lambda^{-1}r_{\lambda}*(\partial_t f+\partial_u f u').
\end{multline}
Sending $t\to 0^+$ in the equation, one has $u(0)=h(0)$. Then, the equation is reduced to
\[
u'(t)=g(t)+r_{\lambda}*([1+\lambda^{-1}\partial_u f] u'),
\]
where
\[
g(t)=(\delta-r_{\lambda})*h'+\lambda^{-1}r_{\lambda}(t)f(0, h(0))+\lambda^{-1}r_{\lambda}*\partial_t f(t, u).
\]
The conclusions hold by similar arguments as in the proof of Theorem \ref{thm:contcomparison}.
\end{proof}

\section{Sketch of the proof for the time continuous CMM property}\label{app:contcmm}

\begin{proof}
Since $a$ is CMM if and only if $a^c$ is CMM, hence if we can show the equivalence between (a) and (b), then the equivalence between (a) and (c) follows automatically. 

(a) $\Rightarrow$ (b):  let $a^c=\tilde{\alpha}^c\delta+k$ be the complementary kernel. Then, $k$ is nonnegative and nonincreasing and integrable on $(0, T)$.  Consider the kernel $a_{\e}$ defined by
\[
a_{\e}*((\epsilon+\tilde{\alpha}^c)\delta+k)=1_{t\ge 0}.
\]
Then, $a_{\e}$ is absolutely continuous. Taking $t\to 0^+$, one deduces that
\[
\e+\tilde{\alpha}^c=1/a_{\e}(0).
\]
Let $r_{\e,\lambda}$ be the resolvent for $a_{\e}$. Using the intuition that $\delta- r_{\e,\lambda}=\lambda^{-1}a_{\e}^{(-1)}*r_{\e,\lambda}$, one may obtain that
\[
1-1*r_{\e,\lambda}=\lambda^{-1}r_{\e,\lambda}*a_{\e}^c=(\lambda a_{\e}(0))^{-1}r_{\e,\lambda}
+\lambda^{-1}r_{\e,\lambda}*k.
\]
This can actually be justified rigorously and see the proof in \cite[Theorem 2.2]{clement1981asymptotic}.
With this, one has an equation for $r_{\e,\lambda}$:
\[
r_{\e,\lambda}+a_{\e}(0) ( k+\lambda)*r_{\e,\lambda}=\lambda a_{\e}(0).
\]
Since $(k+\lambda)$ is positive and nonincreasing, one can then show that $r_{\e,\lambda}>0$.
Then, $1-1*r_{\e,\lambda}=(\lambda a_{\e}(0))^{-1}r_{\e,\lambda}
+\lambda^{-1}r_{\e,\lambda}*k \ge 0$ follows.

Next, one aims to take the limit $\e\to 0^+$. This limit is not straightforward as the atom may appear. The approach is to convolving with absolutely continuous functions $z$ with $z(0)=0$ and $z\ge 0$. Then, show $u_{\e}:=r_{\e,\lambda}*z\to r_{\lambda}*z=:u$ by considering their equations. 
In particular, $r_{\lambda}*(\lambda^{-1}a^c+1_{t\ge 0})=1_{t\ge 0}$, one has
\[
(\lambda 1_{t\ge 0}+\tilde{\alpha}^c\delta+k)*u=\lambda 1_{t\ge 0}* z, 
\quad \e u_{\e}+(\lambda 1_{t\ge 0}+\tilde{\alpha}^c\delta+k)*u_{\e}=\lambda 1_{t\ge 0}* z
\]
Taking the difference,
\[
\e (u_{\e}-u)+(\lambda 1_{t\ge 0}+\tilde{\alpha}^c\delta+k)*(u_{\e}-u)=-\e u.
\]
One readily shows that $\int_0^t |u_{\e}-u|\,ds\to 0$. This implies that $u\ge 0$.
Then, $r_{\lambda}\ge 0$.  The sign of $s$ is similarly proved using the relation between $r_{\lambda}$
and $s_{\lambda}$.

(b) $\Rightarrow$ (a): the main idea is that $\lambda s_{\lambda}*(\lambda^{-1}\delta+a)=1_{t\ge 0}$.
It is expected that $s_{\lambda}\to 0$ as $\lambda\to \infty$ since $r_{\lambda}$ is close to $\delta$
as $\lambda\to\infty$. Hence, the goal is to take certain limit of $\lambda s_{\lambda}$ such that
the limit would be the complementary kernel, which is then nonnegative and nonincreasing.

By the conditions given, it can be shown that $\lambda s_{\lambda}$ is uniformly bounded  by $(\int_0^t b(\tau)\,d\tau)^{-1}$ which is uniformly bounded on $[\e, T]$ for any $\e>0$.
To find a convergent subsequence, one regard $\lambda s_{\lambda}$ as a family of measures and then consider the topology tested against the absolutely continuous functions. The narrow limit is then the complementary kernel, which could possibly have an atom at $t=0$. This intuition can be made rigorous and see the proof of \cite[Theorem 2.2]{clement1981asymptotic}.

\end{proof}

\bibliographystyle{plain}
\bibliography{frac}

\end{document}